\newtheorem*{thm*}{Theorem}
\newtheorem{thm}{Theorem}[section]
\newtheorem{lem}[thm]{Lemma}
\newtheorem{prop}[thm]{Proposition}
\theoremstyle{remark}
\newtheorem*{rem*}{Remark}
\numberwithin{equation}{section}
\crefname{section}{\textsection}{\textsection\textsection}
\newcommand{\sawtooth}[1]{(\mkern -2mu(#1)\mkern -2mu)}
\newcommand{\compl}{\mathrm{c}}
\newcommand{\NN}{\mathbb{N}}
\newcommand{\ZZ}{\mathbb{Z}}
\newcommand{\QQ}{\mathbb{Q}}
\newcommand{\RR}{\mathbb{R}}
\newcommand{\LandauO}{O}
\DeclareMathOperator{\inv}{inv}
\DeclarePairedDelimiter\parentheses{\lparen}{\rparen}
\DeclarePairedDelimiter\braces{\lbrace}{\rbrace}
\DeclarePairedDelimiter\brackets{\lbrack}{\rbrack}
\DeclarePairedDelimiter\abs{\lvert}{\rvert}
\DeclarePairedDelimiter\floor{\lfloor}{\rfloor}
\DeclarePairedDelimiter\ceil{\lceil}{\rceil}
\DeclarePairedDelimiter\ropeninterval{\lbrack}{\rparen}
\DeclarePairedDelimiter\lopeninterval{\lparen}{\rbrack}
\DeclarePairedDelimiter\rcfrac{\llbracket}{\rrbracket} 
\NewDocumentCommand\set{ s o m o }{%
	\IfBooleanTF{#1}{\IfNoValueTF{#4}{\braces*{#3}}{\braces*{\,#3:#4\,}}}{%
		\IfNoValueTF{#2}{\IfNoValueTF{#4}{\braces{#3}}{\braces{\,#3:#4\,}}}{%
			\IfNoValueTF{#4}{\braces[#2]{#3}}{\braces[#2]{\,#3:#4\,}}}}%
}
\NewDocumentCommand\e{ s O{} m }{%
	\IfBooleanTF{#1}{%
		\operatorname{e}_{#2}\parentheses*{#3}%
	}{\operatorname{e}_{#2}\parentheses{#3}}%
}
\newcommand{\Sum}{\sideset{}{^*}{\sum}}
\title{On restricted averages of Dedekind sums}
\date{\today{}}
\subjclass[2020]{
	Primary
	11A55; 
	Secondary
	11F20, 
	11K50, 
	11J25.
}
\keywords{Euclidean algorithm, continued fraction, Dedekind sum, average, asymptotics}
\author{Paolo~Minelli}
\author{Athanasios~Sourmelidis}
\author{Marc~Technau}
\address{%
	Paolo~Minelli\\%
	Department of Mathematics\\%
	KTH Royal Institute of Technology\\%
	Stockholm\\%
	Sweden%
	\and%
	Athanasios~Sourmelidis%
	\and%
	Marc~Technau\\%
	Insitute for Analysis and Number Theory\\%
	Graz University of Technology\\%
	Kopernikusgasse~24/II\\%
	8010~Graz\\%
	Austria%
}
\email{pminelli@kth.se}
\email{mtechnau@math.tugraz.at}
\email{sourmelidis@math.tugraz.at}
\thanks{%
	During the preparation of this article, PM was supported  by the Austrian Science Fund (FWF), project~I-3466, by project F-5510-N26 (which is part of the Special Research program `Quasi-Monto Carlo Methods: Theory and Applications') and by the Knut and Alice Wallenberg foundation grant KAW 2019.0503.
	AS is supported by FWF project~M~3246-N.
	MT is supported by the joint FWF--ANR project \emph{ArithRand} (FWF I 4945-N and ANR-20-CE91-0006).
}
\begin{document}
\begin{abstract}
	We investigate the averages of Dedekind sums over rational numbers in the set 
	$\mathscr{F}_\alpha(Q)\coloneqq\{\,{v}/{w}\in \mathbb{Q}: 0<w\leq Q\,\}\cap \lbrack 0, \alpha \rparen$
	for fixed $\alpha\leq 1/2$. 
	In previous work, we obtained asymptotics for $\alpha=1/2$, confirming a conjecture of Ito in a quantitative form. 
	In the present article we extend our former results, first to all fixed rational $\alpha$ and then to almost all irrational $\alpha$. 
	As an intermediate step we obtain a result quantifying the bias occurring in the second term of the asymptotic for the average running time of the \textit{by-excess} Euclidean algorithm, which is of independent interest.
\end{abstract}
\maketitle%
\section{Introduction and main results}

In this work we are concerned with distributional questions regarding Dedekind sums and related questions in connexion with continued fraction expansions of rational numbers.

\subsection{Dedekind sums}

Dedekind sums have their genesis in the multiplier system for Dedekind's $\eta$ function with respect to the modular group $\operatorname{SL}_2(\ZZ) / \set{ \pm\parentheses{\!{\begin{smallmatrix} 1 & \\ & 1 \end{smallmatrix}}\!} }$, first appearing in Dedekind's supplements to Riemann's collected works ~\cite{Dedekind}.
A quick way of introducing them proceeds as follows.
Let $\floor{x}$ denote the integer part of $x\in\RR$. 
If we denote by $\sawtooth{\,\cdot\,}$ the \emph{saw-tooth function},
\[
\sawtooth{x} = \begin{cases}
x - \floor{x} - \frac{1}{2} & \text{if } x\in\RR\setminus\ZZ, \\
0                           & \text{if } x\in\ZZ,             
\end{cases}
\]
then, for any pair $(a,b)\in\ZZ^2$, $b\neq 0$, the \emph{Dedekind sum}
$s(a,b)$ is defined by
\[
s(a,b) = \sum_{n=1}^b \parentheses*{\!\!\parentheses*{\frac{n}{b}}\!\!} \parentheses*{\!\!\parentheses*{\frac{na}{b}}\!\!}.
\]
It can be verified that $s(a,b) = s(ka,kb)$ for any non-zero integer $k$.
Hence, $s(a/b) \coloneqq s(a,b)$ yields a well-defined function on $\mathbb{Q}_{\geq 0}$.

Here we are concerned with averages of Dedekind sums.
Using the symmetry property $s(1-x) = - s(x)$, it is easy to see that
\begin{equation}\label{eq:SumOfDedekindSums}
\sum_x s(x) = 0,
\end{equation}
when $x$ ranges over the fractions in $[0,1]$ with fixed denominator.
The same holds true, when the sum is extended over all fractions in $[0,1]$ with bounded denominator.
On the other hand, when restricting to the fractions in $[0,1/2]$, then Ito~\cite{ito2004density-result} conjectured on the basis of numerical evidence that the above sum, divided by the number of terms, diverges to $+\infty$.
The main result of~\cite{MinelliSourmelidisTechnau} is a proof of a quantitative version of this conjecture.
To state it, let
\[
\mathscr{F}(Q) \coloneqq \set*{ \frac{v}{w}\in \QQ\cap[0,1] }[ w\leq Q ]
\]
denote the set of \emph{Farey fractions} of order $Q$.
Moreover, for any $\alpha\in(0,1)$, let
\[
\mathscr{F}_{\alpha}(Q) \coloneqq \mathscr{F}(Q) \cap \ropeninterval{0,\alpha}, \quad
\mathscr{F}_{\alpha}^\compl(Q) \coloneqq \mathscr{F}(Q) \cap \lopeninterval{1-\alpha,1}.
\]
The main result of~\cite{MinelliSourmelidisTechnau} then states that
\begin{align}\label{eq:Ito:AsymptoticVersion1/2}
	\frac{1}{\sharp \mathscr{F}(Q)} \sum_{x\in\mathscr{F}_{1/2}(Q)} s(x)
	= \frac{1}{16} \log Q + O(1).
\end{align}

Our main goal here is to prove the following generalisation of~\cref{eq:Ito:AsymptoticVersion1/2}, allowing for \textit{arbitrary cuts} $\alpha$ in place of $1/2$.
For rational $\alpha$ we have the following result:
\begin{thm}\label{thm:Goal}
	Let $\alpha={v}/{w}$ be a reduced rational number in $(0,1/2]$. 
	Then
	\[
	\frac{1}{\sharp \mathscr{F}(Q)} \sum_{x\in\mathscr{F}_{\alpha}(Q)} s(x)
	= \frac{1}{12} \parentheses*{ 1 - \frac{1}{w^2} } \log Q+\LandauO_\alpha(1).
	\]
\end{thm}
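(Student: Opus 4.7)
The plan is to extend the method of \cite{MinelliSourmelidisTechnau} from the cut $\alpha=1/2$ to a general rational cut $\alpha=v/w$. The starting point is a classical identity (due to Barkan and, independently, Hickerson) expressing $12\,s(p/q)$ of a reduced fraction as an alternating sum of the partial quotients in the continued fraction expansion of $p/q$, valid up to explicit boundary corrections. Summing this identity over $x\in\mathscr{F}_\alpha(Q)$ converts the problem into an average, over the continued-fraction data of those Farey fractions lying in $[0,\alpha)$, of alternating partial-quotient sums.

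The second input is the asymptotic for the mean running time of the by-excess Euclidean algorithm restricted to $[0,\alpha)$, which is flagged in the abstract as an intermediate result of independent interest. Through the same conversion, this algorithmic quantity encodes the desired alternating partial-quotient average, with the requisite signs appearing naturally in the by-excess variant. For rational $\alpha=v/w$, this restricted running-time asymptotic takes the form $C_w\log Q+O_\alpha(1)$ with $C_w$ depending only on the denominator $w$, and its transfer through the Barkan--Hickerson identity gives the coefficient in the final Dedekind sum statement.

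To identify the constant as $(1-1/w^2)/12$, I would combine the by-excess asymptotic with a Stern--Brocot decomposition of $[0,v/w)$ together with the transformation formula for Dedekind sums under $\operatorname{SL}_2(\ZZ)$ (Rademacher's cocycle). Each sub-interval in the decomposition is the image of $[0,1)$ under a matrix in $\operatorname{SL}_2(\ZZ)$, reducing the local analysis to something amenable to the $\alpha=1/2$ techniques of \cite{MinelliSourmelidisTechnau}. The apparent $v$-independence of the leading constant is natural here: the location of the cut within the lattice $\tfrac{1}{w}\ZZ$ only affects $O(1)$ error terms, while at leading $\log Q$ order only the scale $1/w$ of this lattice is felt.

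The main obstacle, as I see it, is the intermediate by-excess asymptotic itself. Obtaining its leading order is classical via the transfer operator for the Gauss-type map associated with the by-excess algorithm, but isolating the secondary bias term with the correct constant $1-1/w^2$ requires quantitative equidistribution of continued-fraction data on rational sub-intervals. Secondary technical challenges are combining the contributions from all Farey sub-intervals in the Stern--Brocot decomposition without accumulating $\log$-sized errors, and handling the small-denominator fractions near the endpoints of $[0,v/w)$ without corrupting the $O_\alpha(1)$ remainder.
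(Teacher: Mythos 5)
Your first two steps coincide with the paper's: the Barkan--Hickerson formula reduces $12\,s(x)$ to $\varSigma_{\pm}(x)+O(1)$, and Myerson's identity $\varSigma_{\pm}(x)=\ell(x)-\ell(1-x)+O(1)$ converts the problem into comparing the by-excess running-time sums over $\mathscr{F}_\alpha(Q)$ and over $\mathscr{F}(Q)\cap(1-\alpha,1]$. (One imprecision here: each restricted sum, after normalising by $\sharp\mathscr{F}(Q)$, has leading term of order $(\log Q)^2$; it is only the \emph{difference} of the two that collapses to $C_w\log Q+O_\alpha(1)$.) The genuine gap is in how the constant $1-1/w^2$ is to be identified. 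In the paper this does not come from any decomposition of $[0,v/w)$: the restricted count reduces, via the Zhabitskaya-type lattice-point system and the inversion symmetry $\inv_p(q)\leq\alpha p\Leftrightarrow\inv_q(p)>(1-\alpha)q$, to sums involving $\delta_\alpha(q)=\#\{p\leq q:\gcd(p,q)=1,\ p>(1-\alpha)q\}=\alpha\varphi(q)+\kappa_\alpha(q)$, where $\kappa_\alpha(q)$ is a divisor sum supported on residue classes mod $w$. Summing $\kappa_\alpha(q)/q^2$ produces the Hurwitz-zeta constant $F(\alpha)$, and the elementary identity $\alpha F(\alpha)+(1-\alpha)F(1-\alpha)=\zeta(2)(1-1/w^2)$ (a consequence of $\sum_{r<w}\zeta(2,r/w)=w^2\zeta(2)(1-w^{-2})$ and $\{\alpha r\}+\{-\alpha r\}=1$ for $0<r<w$) is exactly where the theorem's coefficient originates. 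The Weil-bound equidistribution of modular inverses enters only to control error terms, not to produce the constant.

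Your alternative route to the constant --- a Stern--Brocot decomposition of $[0,v/w)$ combined with Rademacher's transformation formula --- faces an obstruction you do not address: the height condition does not transform under $\operatorname{SL}_2(\ZZ)$. If $I=\gamma([0,1])$ is a Farey subinterval, then $\mathscr{F}(Q)\cap I$ is not $\gamma(\mathscr{F}(Q'))$ for any $Q'$; the condition ``denominator $\leq Q$'' pulls back to a bound on a linear form in the numerator and denominator of the preimage, so the local problems are not instances of the full-interval or $\alpha=1/2$ problem solved in \cite{MinelliSourmelidisTechnau}. Moreover the Rademacher cocycle contributes on each piece a Dedekind--Rademacher correction that must itself be averaged over $\mathscr{F}(Q)\cap I$, which is a problem of the same difficulty as the original. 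Your closing heuristic --- that the position of the cut inside $\frac{1}{w}\ZZ$ affects only $O(1)$ --- is consistent with the answer but is not established by the argument. As written, the proposal settles the reduction to the by-excess bias but not the bias itself, which is the substance of the theorem.
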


The shape of the leading term in~\cref{eq:Ito:AsymptoticVersion1/2} raises the question as to how the corresponding results for \emph{irrational} $\alpha$ might look.
We do not resolve this here in full generality, as irrational $\alpha$ which are well approximable by rationals cause problems in our analysis.
Nevertheless, we can prove the following partial result:
\begin{thm}\label{thm:ourresult2}
	There is a subset $\mathscr{S}\subseteq[0,1/2]$ of full Lebesgue measure such that for every $\alpha\in\mathscr{S}$ one has the asymptotic formula
	\[
	\frac{1}{\sharp \mathscr{F}(Q)} \sum_{x\in\mathscr{F}_{\alpha}(Q)} s(x)
	= \frac{1}{12} \log Q
	+ o_{\alpha}(\log Q)
	\quad(\text{as } Q\to\infty).
	\]
\end{thm}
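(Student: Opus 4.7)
The plan is to derive \cref{thm:ourresult2} from \cref{thm:Goal} via rational approximation of the cutoff $\alpha$. Given an irrational $\alpha\in[0,1/2]$ with continued-fraction convergents $\alpha_n=p_n/q_n$, apply \cref{thm:Goal} to each $\alpha_n\in(0,1/2]$ (valid for $n$ large):
\[
\frac{1}{\sharp\mathscr{F}(Q)}\sum_{x\in\mathscr{F}_{\alpha_n}(Q)}s(x)=\frac{1}{12}\parentheses*{1-\frac{1}{q_n^2}}\log Q+O_{\alpha_n}(1).
\]
As $q_n\to\infty$, the leading constant tends to $\frac{1}{12}$, so it is enough to control the signed remainder $R_n(Q)\coloneqq\sum_{x\in\mathscr{F}(Q)\cap I_n}s(x)$, where $I_n$ is the interval between $\alpha_n$ and $\alpha$; by classical Farey-fraction theory $\sharp(\mathscr{F}(Q)\cap I_n)\ll Q^2|I_n|+Q\ll Q^2/(q_n q_{n+1})+Q$.

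The natural estimate for $R_n(Q)$ is Cauchy--Schwarz combined with a local second-moment bound for $s$ on short intervals, which can in turn be deduced from the continued-fraction expression for the Dedekind sum and Gauss--Kuzmin-type statistics for the partial quotients. Running the resulting bound against the main term, the proof is then completed by a diagonal argument: choose $n=n(Q)\to\infty$ with $q_{n(Q)}$ growing like a suitable power of $Q$, so that the leading constant $\tfrac{1}{12}(1-q_{n(Q)}^{-2})\log Q$ delivers the claimed asymptotic $\tfrac{1}{12}\log Q+o(\log Q)$, while the remainder $R_{n(Q)}(Q)$ becomes $o(\sharp\mathscr{F}(Q)\log Q)$ and the error $O_{\alpha_{n(Q)}}(1)/\log Q$ vanishes.

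The principal obstacle is the uniformity in $\alpha$ of both the implied constant in \cref{thm:Goal} and the local second-moment bound, together with the existence of a convergent $q_{n(Q)}$ in the prescribed window for every large $Q$. For very well approximable $\alpha$, the convergent denominators $q_{n+1}$ can leap over many orders of magnitude in $n$, making the required window empty for some $Q$; an exceptionally large partial quotient can also inflate the $O_{\alpha_n}(1)$ constant beyond what the rest of the argument tolerates. These obstructions are removed by a mild Diophantine condition on $\alpha$---e.g.\ finite irrationality type, equivalent to $a_{n+1}\ll q_n^\epsilon$ for every $\epsilon>0$---which holds for almost every $\alpha$ by Khintchine's theorem and hence pins down the full-measure set $\mathscr{S}$. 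A more intrinsic alternative would be to push the Euclidean-algorithm analysis underpinning \cref{thm:Goal} directly to irrational $\alpha$, with the by-excess Euclidean algorithm bias result alluded to in the abstract providing the needed uniform quantitative input.
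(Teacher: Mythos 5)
Your overall skeleton---approximate $\alpha$ by convergents $p_n/q_n$, invoke the rational-cut result, and impose a full-measure Diophantine condition to control the error's dependence on the convergent---matches the paper's strategy, and your diagnosis of the uniformity issues (large partial quotients inflating the $O_{\alpha_n}(1)$ constant) is on target. But there is a genuine gap at the step you rely on to bridge from the cut $\alpha_n$ to the cut $\alpha$: you propose to bound the signed remainder $R_n(Q)=\sum_{x\in\mathscr{F}(Q)\cap I_n}s(x)$ by Cauchy--Schwarz against a ``local second-moment bound for $s$ on short intervals''. No such bound is established in the paper or follows from Gauss--Kuzmin statistics in any routine way, and it is genuinely problematic: the interval $I_n$ between $p_n/q_n$ and $\alpha$ contains (or abuts) later convergents $p_m/q_m$ of $\alpha$, and fractions $a/b\in\mathscr{F}(Q)$ lying very close to such a rational have $|s(a/b)|$ as large as $b/q_m^2$ (their continued fractions acquire one huge partial quotient, which dominates $\varSigma_\pm$ in \cref{eq:hickersonformula}). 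The global second moment $\sum_{x\in\mathscr{F}(Q)}s(x)^2$ is already of order $Q^3$, so even a proportional allocation to $I_n$ leaves Cauchy--Schwarz with no margin unless one proves substantial extra savings near $\alpha$; this is the hard analytic content your proposal defers to a single unproved assertion.

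The paper sidesteps this entirely, and the device it uses is the idea missing from your argument: \emph{positivity and monotonicity of $\ell$}. Via \cref{eq:hickersonformula} and \cref{eq:myersonobservation} one has $12\sum_{x\in\mathscr{F}_\alpha(Q)}s(x)=\sum_{x\in\mathscr{F}_\alpha(Q)}\ell(x)-\sum_{x\in\mathscr{F}_\alpha^\compl(Q)}\ell(x)+O(Q^2)$, and since $\ell\geq 1$ both sums are monotone in the cut. Hence, taking two \emph{consecutive} convergents $\beta=p_N/q_N<\alpha<\beta'=p_{N+1}/q_{N+1}$, the quantity of interest is sandwiched, $\Delta(\beta,\beta')\leq\Delta(\alpha,\alpha)\leq\Delta(\beta',\beta)$, between two expressions that are evaluated by \cref{thm:arbitrarycutsell} (not \cref{thm:Goal}); the mismatch of the two bounds is controlled by showing $\beta'F(\beta')-\beta F(\beta)\ll 1/q_N$, using $\beta'-\beta=1/(q_Nq_{N+1})$. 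No estimate for Dedekind sums on short intervals is ever needed. To repair your proof you would either have to supply the local second-moment estimate (a nontrivial new result) or adopt this sandwich; note also that the sandwich forces you to work with \cref{thm:arbitrarycutsell} and its explicit secondary constant $F(\alpha)$, since \cref{thm:Goal} alone, being a statement about the non-sign-definite $s$, admits no comparable monotonicity.
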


\subsection{Asymptotics for the number of steps of Euclidean algorithms}

There is a well-known link between Dedekind sums, continued fraction expansions and Euclidean algorithms.
To sketch this, we require some notation.
Any rational number $x\in\ropeninterval{0,1}$ admits a \emph{continued fraction expansion}
\begin{equation}\label{eq:continuedfractionexpansion}
x
= [0; a_1, a_2, \ldots, a_n]
\coloneqq 0 + \cfrac{1}{
	a_1 + \cfrac{1}{
		a_2 + \ldots \cfrac{}{
			\ldots + \cfrac{1}{
				a_n
			}
		}
	}
} 
\end{equation}
with \emph{partial quotients} $a_1,a_2,\ldots,a_n \in \NN$.
When requiring that $a_n\neq 1$, which we shall do throughout the sequel, it is well known that $n$ and the partial quotients are uniquely determined by $x$.
The number $n$, for which we also write $L(x)$, admits an interpretation as the number of steps of a certain variant of the Euclidean algorithm, whereas the sum $a_1 + a_2 + \ldots + a_n$ of partial quotients has a similar interpretation.
We also let
\[
\varSigma_{\mathrm{odd} }(x) = \sum_{\substack{ j = 1 \\ j \text{ odd}  }}^n a_j, \quad
\varSigma_{\mathrm{even}}(x) = \sum_{\substack{ j = 2 \\ j \text{ even} }}^n a_j, \quad 
\varSigma_{\pm}(x) = \Sigma_{\mathrm{odd} }(x) - \Sigma_{\mathrm{even} }(x).
\]
The link between Dedekind sums and continued fraction expansions is furnished by a formula due to Barkan~\cite{Barkan} and Hickerson~\cite{hickerson1977continued-fractions}:
\begin{equation}\label{eq:hickersonformula}
s(x)
= \frac{(-1)^n - 1}{8} + \frac{
	x
	- (-1)^n [0;a_n,\ldots,a_2,a_1]
	+ \varSigma_\pm(x)
}{12}.
\end{equation}
Consequently, the study of sums such as~\cref{eq:SumOfDedekindSums} is intimately linked with distribution properties of continued fractions or statistical properties of the number of steps of certain Euclidean algorithms.

The study of such properties has a long history and dates back to the seminal work of
Lochs~\cite{lochs1961statistik} and Heilbronn~\cite{heilbronn1968average-length},
who identified the principal term of the asymptotics for the average number of steps of the classical Euclidean algorithm
\[
\frac{1}{\varphi(b)} \sum_{\substack{ a\leq b \\ \gcd(a,b)=1 }} L\parentheses*{\frac{a}{b}}= A_1 \log b + O\parentheses*{ (\log\log b)^4 } \quad(\text{as~} b\to\infty);
\]
here $\varphi(n)$ is the Euler totient function, $A_1$ is an explicitly given non-zero constant and the error term is absolute.
For the same average, an asymptotic formula with two significant terms was obtained later by Porter~\cite{Porter}.
These results are with respect to averages over numerators and fixed denominator.

Concerning averages over both numerators {and} denominators, an asymptotic formula with power-law fall-off in the error term was obtained by Vall\'ee~\cite{vallee2000a-unifying-framework}. 
This was improved by Ustinov~\cite{Ustinovasymptotic}, who obtained an asymptotic formula with better fall-off in the error term than the one that can be derived from Porter's result
\[
\frac{1}{\sharp \mathscr{F}(Q)}\sum_{x\in\mathscr{F}(Q)}  L\parentheses*{x}= B_1 \log Q + B_2 + O\parentheses*{ (\log Q)^5 / Q },
\]
where $B_1$ and $B_2$ are explicitly given non-zero constants.
While examining the statistical properties of different variants of the Euclidean algorithm, Vall\'ee~\cite{Valleedynamical} obtained also the leading term of the asymptotic formula for the expectation of the number of steps of the so-called \emph{by-excess} Euclidean algorithm which generates for a given rational number $a/b\in(0,1]$ the minus continued fraction expansion
\[
\frac{a}{b}
= \rcfrac{1; b_1, b_2, \ldots, b_n}
\coloneqq 1 - \cfrac{1}{
	b_1 - \cfrac{1}{
		b_2 - \ldots \cfrac{}{
			\ldots - \cfrac{1}{
				b_n
			}
		}
	}
} \, .
\]
Zhabitskaya~\cite{zhabitskaya2009average-length} improved upon Vall\'ee's result by showing that if $\ell(a/b)=m$ denotes the number of partial quotients in the minus continued fraction expansion of $a/b$, then
\[
\frac{1}{\sharp \mathscr{F}(Q)} \sum_{x\in\mathscr{F}(Q) } \ell\parentheses*{x}=C_1 (\log Q)^2 + C_2 \log Q + C_3 + O\parentheses*{ (\log Q)^6 / Q };
\]
here $C_1, C_2, C_3$ are explicitly given non-zero constants, the first two being given by
\begin{equation}\label{eq:Zhabitskaya:Constants}
C_1 = \frac{1}{2\zeta(2)}, \quad C_2 = \frac{1}{\zeta(2)} \parentheses*{ 2\gamma - \frac{3}{2} - 2 \frac{\zeta'(2)}{\zeta(2)} }
\end{equation}
and $\gamma$ denoting the Euler--Mascheroni constant.
For more results regarding the expectation and the variance of the number of steps of the classical and by-excess Euclidean algorithm, we also refer to the work of Baladi and Vall\'ee~\cite{BaladiVallee}, Bykovski\u{\i}~\cite{Bykovski}, Dixon~\cite{dixon1970number-of-steps,Dixonasimple}, Hensley~\cite{hensley1994number-of-steps} and Ustinov~\cite{Ustinovcalculation,Ustinovonthestatistical}.

The approach to Ito's conjecture pursued in~\cite{MinelliSourmelidisTechnau} rests crucially on the observation of Myerson~\cite[Lemma~5]{myerson1987semi-regular} that
\begin{equation}\label{eq:myersonobservation}
\ell(  x) = \varSigma_{\textrm{odd} }(x) - \epsilon(x)
\quad\text{and}\quad
\ell(1-x) = \varSigma_{\textrm{even}}(x) + \epsilon(x)
\end{equation}
for some\footnote{%
	Here $\varepsilon$ is some correction term which is related to our way of forcing uniqueness in the continued fraction expansion \cref{eq:continuedfractionexpansion}.
}
$\epsilon(x)\in\set{0,1}$, and on establishing the asymptotic formula
\[
\sum_{x\in \mathscr{F}_{1/2}(Q)} \ell(x)= c_1 Q^2(\log Q)^2 + c_2 Q^2\log Q + \LandauO\parentheses*{Q^2},
\]
where $2c_1 = C_1$ and $2c_2 = C_2+3/4$ with the constants $C_1$ and $C_2$ given by~\eqref{eq:Zhabitskaya:Constants}.
Consequently, obtaining asymptotic formulas similar to the one above is a crucial step in proving \cref{thm:Goal} and \cref{thm:ourresult2}.

To state our next theorem we introduce for a reduced rational number $\alpha=v/w$ in $(0,1]$ the quantity
\[
F(\alpha)
\coloneqq \frac{1}{\alpha w^2} \sum_{r<w} (1-\braces{\alpha r}) \zeta(2,r/w),
\]
where 
\[
\zeta(s,x) \coloneqq \sum_{n\geq0} \frac{1}{(n+x)^s}\quad
(\Re s>1, \, x\in\lopeninterval{0,1}),
\]
denotes the Hurwitz zeta-function.

\begin{thm}\label{thm:arbitrarycutsell}
	Let $Q$ be a positive integer and $\alpha=v/w$ be a reduced rational number in $(0,1]$ with $w<Q^{1/2}$.
	Let also $E(\alpha):=\ell(\alpha)+(\log w)^2$.
	Then
	\[
	\frac{1}{\sharp \mathscr{F}(Q)} \sum_{x\in\mathscr{F}_{\alpha}(Q)} \ell(x)
	= \frac{\alpha (\log Q)^2}{2\zeta(2)}
	+ \frac{\alpha  \log Q   }{ \zeta(2)} \parentheses*{
		2\gamma - 2 \frac{\zeta'(2)}{\zeta(2)} + F(\alpha) - \frac{3}{2}
	}
	+ \LandauO\parentheses*{E(\alpha)}.
	\]
	Moreover, if $\alpha<1$, then
	\[
	\frac{1}{\sharp \mathscr{F}(Q)} \sum_{x\in\mathscr{F}^\compl_{\alpha}(Q)} \ell(x)
	= \frac{1}{\sharp \mathscr{F}(Q)} \sum_{x\in\mathscr{F}_{\alpha}(Q)} \ell(x)
	- \parentheses*{ 1 - \frac{1}{w^2} } \log Q
	+ \LandauO\parentheses*{E(1-\alpha)}.
	\]
\end{thm}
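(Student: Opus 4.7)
The plan is to extend the method of our earlier work~\cite{MinelliSourmelidisTechnau}, which treats $\alpha=1/2$, to arbitrary reduced rational $\alpha=v/w$. First use Myerson's identity~\cref{eq:myersonobservation} to reduce
\[
\sum_{x\in\mathscr{F}_\alpha(Q)}\ell(x) = \sum_{x\in\mathscr{F}_\alpha(Q)}\varSigma_{\mathrm{odd}}(x) + \LandauO(\sharp\mathscr{F}_\alpha(Q)).
\]
Expand $\varSigma_{\mathrm{odd}}$ over odd-indexed partial quotients, and encode initial continued-fraction segments by $\operatorname{SL}_2(\ZZ)$-matrices so that the inner sums turn into weighted lattice-point counts. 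The restriction $x=v'/w'<v/w=\alpha$ becomes $v'w<vw'$. The new ingredient compared to the case $\alpha=1/2$ is to stratify this count by the residue $r\equiv v'\pmod w$, $0\le r\le w-1$: for each $r$ the constraint translates into a condition on $w'$ contributing a factor $1-\braces*{\alpha r}$, and summing the resulting tails against the weights produced by the preceding partial quotients yields the Hurwitz zeta-values $\zeta(2,r/w)$. After partial summation, the bulk portion reproduces Zhabitskaya's principal term scaled by $\alpha$, giving $\alpha(\log Q)^2/(2\zeta(2))$, while the residue-class bias furnishes exactly
\[
\frac{1}{\alpha w^2}\sum_{r=1}^{w-1}(1-\braces*{\alpha r})\zeta(2,r/w)=F(\alpha)
\]
in the $\log Q$-coefficient (the $\alpha$-independent constant $2\gamma-2\zeta'(2)/\zeta(2)-3/2$ matching that of the unrestricted Zhabitskaya expansion). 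The error $\LandauO(E(\alpha))$ collects the boundary contributions near the line $v'=\alpha w'$ (whose accumulated size is controlled by $\ell(\alpha)$) together with the $\LandauO((\log w)^2)$ loss coming from the completion of the residue-class sums modulo $w$.

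\textbf{Part 2 (the ``moreover'').} Once Part~1 is in hand, the second statement follows by combining it with Zhabitskaya's full Farey-set asymptotic. Since $1-\alpha=(w-v)/w$ has denominator $w\le Q$, $1-\alpha\in\mathscr{F}(Q)$, giving the disjoint decomposition $\mathscr{F}(Q)=\mathscr{F}_{1-\alpha}(Q)\sqcup\{1-\alpha\}\sqcup\mathscr{F}_\alpha^\compl(Q)$, whence
\[
\sum_{x\in\mathscr{F}_\alpha^\compl(Q)}\ell(x)=\sum_{x\in\mathscr{F}(Q)}\ell(x)-\sum_{x\in\mathscr{F}_{1-\alpha}(Q)}\ell(x)-\ell(1-\alpha).
\]
Feed Zhabitskaya into the first sum and Part~1 applied to $1-\alpha$ (same denominator $w$, same error regime) into the second, then divide by $\sharp\mathscr{F}(Q)$. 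The coefficient of $(\log Q)^2$ is $C_1-(1-\alpha)/(2\zeta(2))=\alpha/(2\zeta(2))$, matching Part~1 at~$\alpha$. The coefficient of $\log Q$ agrees with that coming from Part~1 at $\alpha$ minus $(1-1/w^2)$ once one invokes the identity
\[
\alpha F(\alpha)+(1-\alpha)F(1-\alpha)=\parentheses*{1-1/w^2}\zeta(2),
\]
which in turn follows from $\braces*{rv/w}+\braces*{-rv/w}=1$ for $1\le r\le w-1$ (a consequence of $\gcd(v,w)=1$, forcing $rv\not\equiv 0\pmod w$) together with $\sum_{r=1}^{w-1}\zeta(2,r/w)=(w^2-1)\zeta(2)$ (the residue-class decomposition of $\zeta(2)$). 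The leftover $\ell(1-\alpha)/\sharp\mathscr{F}(Q)$ and the Zhabitskaya error $\LandauO((\log Q)^6/Q)$ are both $\LandauO(1)$ and hence absorbed into $\LandauO(E(1-\alpha))$.

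The substantive difficulty is concentrated in Part~1: identifying the precise bias $F(\alpha)$ while controlling the error at the level $\LandauO(E(\alpha))$ requires carefully tracking lattice points near the boundary line $v'=\alpha w'$ (source of the $\ell(\alpha)$ contribution to $E(\alpha)$) and completing the residue sums modulo $w$ (source of the $(\log w)^2$ contribution). The hypothesis $w<Q^{1/2}$ is precisely what ensures that the $w$ residue classes provide full main terms on the scale $Q$ without absorbing further error.
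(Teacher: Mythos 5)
Your Part~2 is correct and is essentially the paper's own argument: the decomposition $\mathscr{F}(Q)=\mathscr{F}_{1-\alpha}(Q)\sqcup\set{1-\alpha}\sqcup\mathscr{F}_\alpha^\compl(Q)$, the application of the first formula at $1$ and at $1-\alpha$, and the identity $\alpha F(\alpha)+(1-\alpha)F(1-\alpha)=\zeta(2)\parentheses*{1-1/w^2}$ (which you justify, correctly, via $\braces*{\alpha r}+\braces*{-\alpha r}=1$ and $\sum_{r<w}\zeta(2,r/w)=(w^2-1)\zeta(2)$; the paper merely asserts this identity). The problem is Part~1, which is not a proof but a restatement of what has to be proved. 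The two sentences carrying all the weight --- that ``the bulk portion reproduces Zhabitskaya's principal term scaled by $\alpha$'' and that ``the residue-class bias furnishes exactly $F(\alpha)$ in the $\log Q$-coefficient'' --- are precisely the content of the theorem. Nothing in the proposal derives the secondary coefficient $2\gamma-2\zeta'(2)/\zeta(2)+F(\alpha)-3/2$ or the error $\LandauO(E(\alpha))$; they are named and then attributed to plausible-sounding sources. In the paper this is the whole of \cref{prop:gapbetweensystems} and \cref{sec:ProofOfMainProposition}: one first transfers the length sum to a count of solutions of the system \cref{eq:TheSystem:CC} (\cref{lem:TheSumOfLengths,lem:reducedN4system}, which is where the $\ell(\alpha)$ loss genuinely arises --- from the fact that at most $\ell(\alpha)$ convergents of an $x<\alpha$ can exceed $\alpha$, not from a generic ``boundary line'' heuristic), and then evaluates that count by splitting into five ranges. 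Your sketch contains no substitute for the two analytic inputs that make the secondary term accessible: the equidistribution of $q$ with $\inv_p(q)\leq\alpha p$ coming from Weil's bound (\cref{lem:POMI}), and the inversion trick $\inv_p(q)\leq\alpha p\iff\inv_q(p)>(1-\alpha)q$ (\cref{lem:inversiontrick}), which is what lets one reach the exact counting function $\delta_\alpha(q)=\alpha\varphi(q)+\kappa_\alpha(q)$ whose bias term $\kappa_\alpha$ is the actual source of the Hurwitz zeta values. Without these, the assertion that the restriction to $[0,\alpha)$ multiplies the main terms by $\alpha$ and perturbs the $\log Q$-coefficient by exactly $F(\alpha)$ is unsupported.

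A secondary, structural point: you propose to route Part~1 through Myerson's identity, replacing $\ell(x)$ by $\varSigma_{\mathrm{odd}}(x)$ and analysing ordinary continued fractions via $\operatorname{SL}_2(\ZZ)$-matrix encodings. The paper does the opposite: it works directly with the minus continued fraction expansion (Zhabitskaya's transference) and only invokes Myerson's identity later, to pass from $\ell$ to Dedekind sums in the proof of \cref{thm:Goal}. Your route is not wrong in principle --- a Porter/Ustinov-style analysis of restricted sums of odd-indexed partial quotients could conceivably be made to work --- but it is a different and unexecuted programme, and the residue-class stratification you describe (by $v'\bmod w$ for the numerator of $x=v'/w'$) does not obviously produce the divisor-sum structure $\sum_{d\mid q,\,d\equiv r\bmod w}\mu(q/d)$ from which $F(\alpha)$ actually emerges in the paper. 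As it stands, the substantive half of the theorem is missing.
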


\subsection{Structure of the paper}
In \cref{sec:auxiliaryresults} we collect several technical lemmas. 
In \cref{sec:proofsofthemainresults} we prove our results assuming \cref{prop:gapbetweensystems}, whose proof is given right after in \cref{sec:ProofOfMainProposition}.
Lastly, in \cref{sec:remarksandsomeheuristics} we conclude with some remarks on \cref{thm:ourresult2}.

\subsection{Notation}

We use the Landau notation $f(x) = \LandauO(g(x))$ and the Vinogradov notation $f(x) \ll g(x)$ to mean that there exists some constant $C>0$ such that $\abs{f(x)} \leq C g(x)$ holds for all admissible values of $x$ (where the meaning of `admissible' will be clear from the context).
Unless otherwise indicated, any dependence of $C$ on other parameters is specified using subscripts.
Similarly, we write `$f(x) = o(g(x))$ as $x\to\infty$' if $g(x)$ is positive for all sufficiently large values of $x$ and $f(x)/g(x)$ tends to zero as $x\to\infty$.

Given two coprime integers $a$ and $q\neq 0$ we write $\inv_q(a)$ for the least positive integer in the residue class $(a\bmod q)^{-1}$.

A sum with a star ($\Sum$) is understood to involve the additional summation condition \enquote{$\gcd(p,q) = 1$}. Sums like $\displaystyle\sum_{n\leq x}$ are understood to range over all $n=1,2,\ldots$ not exceeding $x$.
\section{Auxiliary Results}
\label{sec:auxiliaryresults}

\subsection{Lemmas on modular inversion}

We require information on the distribution of modular inverses.
The next lemma, itself a special case of a more general distribution result (see~\cite[Theorem 13]{shparlinskimodhyp}), which follows from Weil's bound on exponential sums, contains what we need.

\begin{lem}\label{lem:POMI}
	Let $p$ be a positive integer, $\alpha \in (0,1]$, $X_2 > X_1 \geq 0$.
	Then, for any $\epsilon>0$, both sums
	\[
	\Sum_{\substack{ X_1\leq q\leq X_2 \\ \inv_{p}(q) \leq \alpha p }} 1
	\quad\text{and}\quad
	\Sum_{\substack{ X_1\leq q\leq X_2 \\ \inv_{p}(q) > (1-\alpha) p }} 1
	\]
	are asymptotically equal to
	\[
	\alpha \frac{\varphi(p)}{p} (X_2 - X_1)
	+ \LandauO_{\epsilon}\parentheses*{ \frac{X_2 - X_1 + p}{p^{1/2-\epsilon}} }.
	\]
\end{lem}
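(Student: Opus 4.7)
Both sums reduce to each other by subtraction: the second equals
$\Sum_{X_1 \le q \le X_2} 1 - \Sum_{X_1 \le q \le X_2,\, \inv_p(q) \le (1-\alpha)p} 1$,
where a routine M\"obius inversion gives
$\Sum_{X_1 \le q \le X_2} 1 = (\varphi(p)/p)(X_2 - X_1) + \LandauO(\tau(p))$.
So it is enough to treat the first sum.
My plan is to expand the arithmetic condition $\inv_p(q) \le \alpha p$ as a finite Fourier series modulo~$p$,
extract the main term from the trivial frequency,
and bound the remainder using Weil's bound on Kloosterman sums.

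Concretely, writing $\bar q := \inv_p(q)$, orthogonality yields
\[
\indic_{\bar q \le \alpha p}
= \frac{1}{p} \sum_{h \bmod p} c_h\, \e[p]{h \bar q},
\qquad c_h := \sum_{n = 1}^{\lfloor \alpha p \rfloor} \e[p]{-h n},
\]
with $c_0 = \lfloor \alpha p \rfloor$ and $|c_h| \le \min(\alpha p, p/(2|h|))$ for $0 < |h| \le p/2$.
The $h = 0$ contribution, combined with the M\"obius count above, produces the main term $\alpha (\varphi(p)/p)(X_2 - X_1)$ plus an error of size $\LandauO(\tau(p) + (X_2-X_1)/p)$, comfortably within the claimed bound. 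The substantive remaining task is then to estimate
\[
K(h) := \Sum_{X_1 \le q \le X_2} \e[p]{h \bar q} \qquad (h \not\equiv 0 \pmod p).
\]

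I would complete the range of $q$ by a further Fourier expansion modulo~$p$, rewriting $K(h)$ as a linear combination of complete Kloosterman sums $S(h, t; p) := \Sum_{a \bmod p} \e[p]{h \bar a + t a}$ weighted by the Fourier coefficients of the indicator of $[X_1, X_2] \bmod p$. Weil's bound gives $|S(h, t; p)| \ll_\epsilon p^{1/2 + \epsilon}$ for $h \not\equiv 0 \pmod p$ (the composite-modulus factor $2^{\omega(p)}$ being absorbed into $p^\epsilon$), and the standard P\'olya--Vinogradov estimates on the coefficients then yield $|K(h)| \ll_\epsilon p^{1/2+\epsilon} \log p + (h, p)(X_2 - X_1)/p$. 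Weighting by $|c_h|$ and summing with the aid of $\sum_{h \ne 0} \min(\alpha p, p/|h|) \ll p \log p$ together with an analogous bound carrying the extra factor $(h, p)$ produces the claimed error $\LandauO_\epsilon((X_2 - X_1 + p)/p^{1/2 - \epsilon})$.

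The main obstacle is technical bookkeeping rather than any new idea: the various divisor-type factors ($\tau(p)$, $2^{\omega(p)}$, etc.)\ arising from the composite-modulus form of Weil's bound and from the completion step must all be uniformly absorbed into $p^\epsilon$. Conceptually the argument is a prototypical application of the P\'olya--Vinogradov completion method to sums over modular inverses, and the general distribution result of Shparlinski cited in the excerpt readily specialises to the statement above.
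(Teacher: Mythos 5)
Your argument is correct and is essentially the proof the paper relies on: the paper simply cites the $\alpha=1/2$ case from its predecessor (Lemma~A.1 of Minelli--Sourmelidis--Technau) and Shparlinski's general distribution result, both of which rest on exactly the completion method you describe --- Fourier-expanding the condition on $\inv_p(q)$, extracting the main term from the zero frequency via M\"obius inversion, and controlling the nonzero frequencies by completing to Kloosterman sums and applying Weil's bound. Your bookkeeping of the coefficient bounds and the $\gcd(h,p)$ and divisor factors is sound and lands within the stated error term.
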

\begin{proof}
	The proof is identical to that of \cite[Lemma~A.1]{MinelliSourmelidisTechnau} where the special case $\alpha=1/2$ is considered.
	We omit the details.
\end{proof}

The next result allows one to pass from $\inv_p(q)$ to $\inv_q(p)$.
A special case of this was already used in~\cite[Lemma~4.1]{MinelliSourmelidisTechnau}.

\begin{lem}\label{lem:inversiontrick}
	Let $p,q\geq2$ be coprime integers and $\alpha = {v}/{w}$ be a reduced rational number in $(0,1]$ with $w\leq p$ or $w\leq q$. 
	Then 
	\begin{align*}
		\inv_p(q) \leq \alpha p
		\quad\text{if and only if}\quad
		\inv_q(p)   >  (1-\alpha) q.
	\end{align*}
\end{lem}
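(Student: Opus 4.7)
The plan is to reduce the biconditional to a single algebraic identity between the two inverses. Writing $a \coloneqq \inv_p(q)$ and $b \coloneqq \inv_q(p)$, my first step is to prove
\[
qa + pb = pq + 1.
\]
I would derive this by noting that $qa \equiv 1 \pmod{p}$ yields $qa - 1 = pk$ for some integer $k$, that the ranges $1 \leq a \leq p-1$ (valid because $p \geq 2$ and $\gcd(p,q) = 1$) pin down $1 \leq k \leq q-1$, and finally that reducing the same equation modulo $q$ gives $pk \equiv -1 \pmod{q}$, so $k \equiv q - b \pmod{q}$. Matching the residue class against the admissible range forces $k = q - b$, and the identity follows.

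Given this identity, the inequality $a \leq \alpha p$ is equivalent to the integer inequality $aw \leq vp$. Multiplying the identity by $w$ and isolating $pbw$ turns this into
\[
pbw = pqw + w - qaw \geq pq(w - v) + w > pq(w - v),
\]
so that $bw > q(w-v)$ and hence $b > (1-\alpha)q$. This direction requires no hypothesis on $w$.

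For the converse I would start from the integer inequality $bw \geq q(w-v) + 1$ — equivalent to $b > (1-\alpha)q$ — and insert it into the same identity to obtain $qaw \leq pvq + (w - p)$. If $w \leq p$ then the correction $w - p$ is non-positive and the desired $aw \leq vp$ is immediate. The case $w \leq q$ (with possibly $w > p$) is slightly more delicate: here $(w - p)/q < 1$, so after dividing by $q$ the integer $aw - vp$ is bounded by something strictly less than $1$, which forces $aw \leq vp$. I expect this integrality step to be the only (very minor) obstacle; once the identity $qa + pb = pq + 1$ is in place, the rest is essentially bookkeeping.
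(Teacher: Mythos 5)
Your proof is correct and follows essentially the same route as the paper: both hinge on the identity $q\inv_p(q)+p\inv_q(p)=1+pq$ and then settle the biconditional by an integrality argument invoking $w\leq p$ or $w\leq q$. The only difference is cosmetic — you clear denominators and also spell out a derivation of the identity, which the paper simply cites as well known.
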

\begin{proof}
	We start with the well-known identity
	\[
	q\inv_p(q)+p\inv_q(p)=1+pq
	\]
	(cf.\ \cite[proof of Lemma~4.1]{MinelliSourmelidisTechnau}).
	From this, by rearranging terms, we infer that
	\begin{align*}
		\inv_p(q)\leq  \alpha p
		\quad\text{if and only if}\quad
		\inv_q(p)\geq(1-\alpha)q+\frac{1}{p}.
	\end{align*}
	Similarly,
	\begin{align*}
		\inv_q(p)>(1-\alpha)q
		\quad\text{if and only if}\quad
		\inv_p(q)<  \alpha p +\frac{1}{q}.
	\end{align*}
	The result now easily follows by appealing to either of the two assumptions $w \leq p$ or $w \leq q$.
\end{proof}

\begin{lem}\label{lem:deltaplusminus}
	Let $\alpha = v/w$ be a reduced rational number in $(0,1]$.
	Suppose that
	\[
	\delta_\alpha(q) = \Sum_{(1-\alpha)q<p\leq q} 1
	\quad\text{and}\quad
	\kappa_\alpha(q) = \sum_{r<w}(1-\braces{\alpha r})\mathop{\sum_{d\mid q}}_{d\equiv r\bmod w}\mu\parentheses*{\frac{q}{d}}.
	\]
	Then, one has that $\delta_\alpha(q)=\alpha\varphi(q)+\kappa_\alpha(q)$ for every integer $q\geq1$.
\end{lem}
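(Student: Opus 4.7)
The plan is to clear the coprimality constraint in $\delta_\alpha(q)$ via Möbius inversion, rewrite the resulting divisor sum in terms of $\ceil{\alpha e}$, and then decompose according to the residue of $e$ modulo~$w$. The main delicate point will be the correct treatment of divisors $e$ with $w\mid e$: naively one might expect $\ceil{\alpha e}-\alpha e = 1-\braces{0} = 1$, but it actually equals $0$, which is why the sum defining $\kappa_\alpha(q)$ effectively starts at $r=1$ rather than $r=0$.

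First, inserting $\indic_{\gcd(p,q)=1}=\sum_{d\mid\gcd(p,q)}\mu(d)$ and interchanging summations yields
\[
\delta_\alpha(q)=\sum_{d\mid q}\mu(d)\,\#\set*{m\in\ZZ}[(1-\alpha)q/d<m\leq q/d].
\]
Using $\floor{-x}=-\ceil{x}$ together with the fact that $q/d$ is an integer, the cardinality evaluates to $\ceil{\alpha q/d}$. Substituting $e=q/d$ then turns the above into
\[
\delta_\alpha(q)=\sum_{e\mid q}\mu(q/e)\ceil*{\alpha e}.
\]

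Second, I would expand $\ceil{\alpha e}$ according to $r:=e\bmod w$. Since $\gcd(v,w)=1$, the quantity $\alpha e$ is an integer if and only if $r=0$; writing $e=kw+r$ with $0\le r<w$, a short computation gives
\[
\ceil*{\alpha e}=\alpha e+\begin{cases}0&\text{if }r=0,\\ 1-\braces{\alpha r}&\text{if }1\le r\le w-1.\end{cases}
\]
Splitting the divisor sum accordingly, the contribution from the linear part $\alpha e$ equals $\alpha\sum_{e\mid q}\mu(q/e)\,e=\alpha\varphi(q)$ by the convolution identity $\varphi=\mu*\id$. Grouping the remaining terms by residue class modulo~$w$ reproduces exactly $\kappa_\alpha(q)$ as defined in the statement; adding the two contributions yields the claim.
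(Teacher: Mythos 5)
Your proof is correct and takes essentially the same route as the paper's: Möbius inversion to remove the coprimality condition, extraction of the linear term via $\varphi=\mu*\operatorname{id}$, and grouping of the remaining fractional-part correction by residue classes modulo $w$. The only cosmetic difference is that you package the interval count as $\lceil \alpha q/d\rceil$ where the paper writes $\alpha q/d+\{(1-\alpha)q/d\}$ (these are identical), and your explicit check that the divisors with $w\mid e$ contribute nothing, matching the range $1\le r<w$ in the definition of $\kappa_\alpha$, is a point the paper leaves implicit.
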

\begin{proof}
	We begin by rewriting the summation condition $\gcd(p,q)=1$ implicit in $\Sum$ using Möbius inversion.
	Hence,
	\[
	\delta_\alpha(q)
	= \sum_{(1-\alpha)q < p \leq q} \sum_{d\mid\gcd(p,q)} \mu(d)
	= \sum_{d\mid q} \mu(d) \sharp\set{ p \in \NN }[ (1-\alpha)q < p \leq q, \, d\mid p ].
	\]
	The sets on the right hand side have cardinality $\alpha q/d + \braces{(1-\alpha)q/d}$.
	Hence,
	\[
	\delta_\alpha(q)= \alpha \sum_{d\mid q} \mu(d) \frac{q}{d} + \sum_{d\mid q} \mu(d) \braces*{(1-\alpha)\frac{q}{d}}.
	\]
	The first sum on the right hand side equals $\varphi(q)$.
	Upon grouping the terms of the second sum by the residue $d\bmod w$, we obtain the function $\kappa_\alpha(q)$.
\end{proof}	

\subsection{Continued fractions}	
The key step in the approach of Lochs~\cite{lochs1961statistik} and Heilbronn~\cite{heilbronn1968average-length} to statistics of lengths of (ordinary) continued fraction expansions is the transference of the problem to a problem of counting solutions to certain Diophantine inequalities.
Here we require a version for minus continued fraction expansions.
Such a version was initially given by Zhabitskaya~\cite[Lemma~2]{zhabitskaya2009average-length}.
In~\cite[Lemma~4.3]{MinelliSourmelidisTechnau} the authors employed a restricted variant of this, which we generalize below in the case of arbitrary rational cuts.
\begin{lem}\label{lem:TheSumOfLengths}
	Let $\alpha$ be a rational number in $(0,1]$ and $Q$ be a positive integer.
	If $N_\alpha(Q)$ denotes the sum of the lengths of the minus continued fraction expansions of the numbers $a/b$ with $1\leq a<\alpha b$, $q\leq Q$, and $T_\alpha(Q)$ denotes the number of solutions $(a_1,q_1,a_2,q_2,m,n,a,b)\in\NN^8$ of the following system of equalities and inequalitites
	\begin{gather}\label{eq:TheSystem:1}
		\left\lbrace\begin{array}{@{}lll@{}}
			a_1q_2 - a_2 q_1 = 1, &
			1\leq a_1 \leq \alpha q_1, &
			1\leq a_2\leq q_2, \\
			n a_2 - m a_1 = a, &
			n q_2 - m q_1 = b, &
			1 \leq a < b \leq Q, \\
			1 \leq m < n, &
			1\leq q_1 < q_2,
		\end{array}\right.
	\end{gather}
	then
	\[
	N_\alpha(Q) = T_\alpha(Q) + \LandauO\parentheses*{\ell(\alpha)Q^2}.
	\]
\end{lem}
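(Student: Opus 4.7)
The plan is to adapt the bijective strategy of Zhabitskaya~\cite{zhabitskaya2009average-length}, extended to the case $\alpha=1/2$ in \cite[Lemma~4.3]{MinelliSourmelidisTechnau}, to the case of an arbitrary rational cut $\alpha=v/w$. The guiding principle is that each partial quotient in the minus continued fraction expansion of $a/b$ can be identified with a pair of consecutive convergents, and $a/b$ can be recovered from such a pair by a two-parameter positive integer combination.

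First I would set up the correspondence. For each reduced $a/b$ with $1\leq a<\alpha b$ and $b\leq Q$, let $p_k/q_k$ ($k=0,1,\ldots,n$) denote the convergents of the minus CF expansion of $a/b$. Each admissible step index $k$ produces a candidate 8-tuple by setting $(a_1,q_1,a_2,q_2)=(p_{k-1},q_{k-1},p_k,q_k)$; the determinantal identity for adjacent convergents then furnishes the defining relation $a_1q_2-a_2q_1=1$. The tail of the expansion at index $k$ yields a pair $(m,n)\in\NN^2$ with $1\leq m<n$ such that $(a,b)=n(a_2,q_2)-m(a_1,q_1)$. This gives the main map; the reverse direction, showing that every solution of \eqref{eq:TheSystem:1} arises in this way, proceeds exactly as in the references above, since the two conditions $a_1q_2-a_2q_1=1$ and $1\leq m<n$, $1\leq q_1<q_2$ force $(a_1/q_1,a_2/q_2)$ to be a consecutive pair of convergents of $a/b$.

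The technical heart of the argument is the matching of the restriction $a<\alpha b$ on the counted fractions with the convergent-level condition $a_1\leq\alpha q_1$. This equivalence holds automatically once the convergents $p_{k-1}/q_{k-1}$ have \emph{separated} from $\alpha$ in the appropriate sense, but it may fail during an initial segment of step indices during which the convergents of $a/b$ agree with those of $\alpha$. The length of this initial segment is controlled by the mutual agreement depth of the minus CF expansions of $\alpha$ and $a/b$, and is therefore bounded uniformly by $\ell(\alpha)$. Since the number of admissible fractions $a/b$ is $\ll Q^2$, the total mismatch between $N_\alpha(Q)$ and $T_\alpha(Q)$ is of the size $\LandauO(\ell(\alpha)Q^2)$, as required.

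The principal obstacle I anticipate is the precise bookkeeping of this boundary effect: translating the inequality $a<\alpha b$ into the exact condition $a_1\leq\alpha q_1$ on the associated 8-tuple and carefully identifying those initial indices $k$ for which the implication can fail. Once this translation has been pinned down, the remaining ingredients — the determinantal identities for minus CF convergents, the correct inequality ranges for $(m,n)$, and the summation over step indices producing the full length $\ell(a/b)$ — follow the pattern already present in \cite[Lemma~4.3]{MinelliSourmelidisTechnau} and can be transcribed with only cosmetic changes.
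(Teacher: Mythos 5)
Your proposal follows essentially the same route as the paper: reduce to Zhabitskaya's correspondence between convergent pairs of the minus continued fraction expansion and solutions of the system, observe that $a_1\leq\alpha q_1$ forces $a/b<\alpha$ since the convergents decrease to $a/b$, and bound the reverse mismatch per fraction by $\ell(\alpha)$ because only an initial segment of convergents (those still agreeing with the expansion of $\alpha$) can exceed $\alpha$. The paper fills in the one step you flag as an anticipated obstacle with a short induction showing $s_j=t_j$ for the agreeing indices, but your outline is correct and matches its structure.
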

\begin{proof}
	Following \cite[Lemma 2]{zhabitskaya2009average-length} we know that $N_\alpha(Q)$ is, apart from an error $O\parentheses*{Q^2}$, equal to the number of solutions of
	\begin{gather}\label{eq:TheSystem:1.1}
		\left\lbrace\begin{array}{@{}lll@{}}
			a_1q_2 - a_2 q_1 = 1, &
			1\leq a_1 \leq q_1, &
			1 \leq a_2 \leq q_2, \\
			n a_2 - m a_1 = a, &
			n q_2 - m q_1 = b, &1\leq a<\alpha b\leq\alpha Q,\\
			1 \leq m < n, &
			1\leq q_1 < q_2.
		\end{array}\right.
	\end{gather} 
	In particular, the pairs $(p_1,q_1)$ and $(p_2,q_2)$ generated from each solution of the above system are successive convergents of the minus continued fraction expansion of $a/b$ other than $a/b$ itself \cite[Lemma 1]{zhabitskaya2009average-length}.
	Moreover, the sequence of these convergents is monotonically decreasing to $a/b$.
	Therefore, if $a_1\leq\alpha q_1$ then $a/b<a_1/q_1\leq\alpha$.
	
	It is not generally true that if $a/b$ is less than $\alpha$ then so is the case for its convergents.
	However, if $\ell(a/b)\geq\ell(\alpha)$, then the $\ell(\alpha)$-th convergent of $a/b$ will be less than or equal to $\alpha$, and so will be the succeeding convergents since they form a strictly decreasing sequence. 
	Indeed, if $\alpha=\rcfrac{1;t_1,\ldots,t_u}$ and $a/b=\rcfrac{1;s_1\ldots,s_v}<\alpha$ with $u\leq v$, then $\rcfrac{1;s_1,\dots,s_u}\leq\alpha$.
	For if $\rcfrac{1;s_1}\leq\alpha$ then we are done.
	Otherwise, the inequalities $a/b<\alpha<\rcfrac{1;s_1}$ imply that $s_1=t_1$. 
	If now $\rcfrac{1;t_1,s_2}\leq\alpha$ then we are done.
	Otherwise, the inequalities $a/b<\alpha<\rcfrac{1;t_1,s_2}$ imply that $s_2=t_2$.
	Repeating these steps at most $\ell(\alpha)$ times, we obtain the desired inequality.
	
	Hence, for each pair $(a,b)$, the number of solutions of the systems~\cref{eq:TheSystem:1} and~\cref{eq:TheSystem:1.1} differ by at most $\ell(\alpha)$.
	Since we count $O\parentheses{Q^2}$ pairs $(a,b)$, the lemma follows.
\end{proof}

The eight variables in~\cref{lem:TheSumOfLengths} can be reduced to four.
This is the contents of the next lemma.

\begin{lem}\label{lem:reducedN4system}
	Let $\alpha$ be a rational number in $(0,1]$ and $Q$ be a positive integer.
	If $R_\alpha(Q)$ denotes the number of solutions $(p, q, n, m) \in \NN^4$ of the following system of inequalities
	\begin{gather}\label{eq:TheSystem:CC}
		\left\lbrace\begin{array}{@{}lll@{}}
			\gcd(p,q) = 1, &
			p\geq2,&q \geq 1,\\
			2 \leq n q + kp \leq Q, &
			1 \leq k < n, \\
			\inv_p(q) \leq \alpha p,
		\end{array}\right.
	\end{gather}
	then
	\begin{align*}
		N_\alpha(Q) = R_\alpha(Q) + \LandauO\parentheses*{\ell(\alpha)Q^2}.
	\end{align*}
\end{lem}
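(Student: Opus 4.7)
The plan is to establish a near-bijection between solutions of~\cref{eq:TheSystem:1} and those of~\cref{eq:TheSystem:CC}, and then apply \cref{lem:TheSumOfLengths}. The key change of variables is
\[
p \coloneqq q_1, \quad q \coloneqq q_2 - q_1, \quad k \coloneqq n - m,
\]
where $k$ denotes the fourth coordinate appearing in the constraints of~\cref{eq:TheSystem:CC}. Under this substitution, $1 \leq q_1 < q_2$ and $\gcd(q_1, q_2) = 1$ become $q \geq 1$ and $\gcd(p, q) = 1$; the bounds $1 \leq m < n$ become $1 \leq k < n$; and a short calculation yields $b = n q_2 - m q_1 = n q + k p$. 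Moreover, the equation $a_1 q_2 - a_2 q_1 = 1$ combined with $q_2 \equiv q \pmod{p}$ gives $a_1 \equiv q^{-1} \pmod{p}$, so when $p \geq 2$ we have $a_1 = \inv_p(q)$, and the restriction $a_1 \leq \alpha q_1$ translates to $\inv_p(q) \leq \alpha p$.

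The crux is to show that the positivity constraints $1 \leq a_2 \leq q_2$ and $1 \leq a < b$ from~\cref{eq:TheSystem:1}, which are absent from~\cref{eq:TheSystem:CC}, hold automatically under this substitution. I would record the two identities
\[
a_2 = a_1 + \frac{a_1 q - 1}{p}, \qquad a = k a_1 + n(a_2 - a_1),
\]
and note that $(a_1 q - 1)/p$ is a non-negative integer by the very definition of $a_1 = \inv_p(q)$. Hence $1 \leq a_1 \leq a_2$ and $a \geq k a_1 \geq 1$. The complementary bounds $a_2 \leq q_2$ and $a < b$ reduce to verifying $a/b < a_2/q_2 < 1$, which in turn follow from the standard identity $a q_2 - a_2 b = -m$ (itself a direct consequence of $a_1 q_2 - a_2 q_1 = 1$).

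The only case falling outside the scope of the substitution is $p = q_1 = 1$. There $a_1 = 1$ forces $a_2 = q_2 - 1$, and~\cref{eq:TheSystem:1} reduces to counting triples $(q_2, m, n)$ with $q_2 \geq 2$, $1 \leq m < n$, and $n q_2 - m \leq Q$. Fixing $n \leq Q$ and summing over $m$ and $q_2$ gives an upper bound of order $Q + n$ for each $n$, so the total contribution is $\LandauO(Q^2)$ and is comfortably absorbed by the error term of the lemma.

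I do not anticipate a genuine obstacle: once the change of variables is in hand, the proof is largely bookkeeping. The only point worth emphasising is that~\cref{eq:TheSystem:CC} omits the condition $a \geq 1$ present in~\cref{eq:TheSystem:1}, but the identity $a = k a_1 + n(a_2 - a_1)$ with non-negative summands makes this omission legitimate.
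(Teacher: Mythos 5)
Your proof is correct and follows essentially the same route as the paper, which simply defers to \cite[Lemma~4.4]{MinelliSourmelidisTechnau} for the standard substitution $p=q_1$, $q=q_2-q_1$, $k=n-m$ showing $T_\alpha(Q)=R_\alpha(Q)+\LandauO(Q^2)$ and then invokes \cref{lem:TheSumOfLengths}. Your write-up supplies exactly the bookkeeping that citation encapsulates, including the correct treatment of the boundary case $q_1=1$ as an $\LandauO(Q^2)$ term.
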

\begin{proof}
	As in \cite[Lemma~4.4]{MinelliSourmelidisTechnau}, one shows that
	$T_\alpha(Q)=R_\alpha(Q)+\LandauO\parentheses*{Q^2}$.
\end{proof}

\subsection{Auxiliary asymptotic formulae}
We conclude this section with a list of formulae which will be employed in the succeeding proofs.
\begin{lem}\label{lem:MobInverFormula}
	Let $\Psi(Q) = a Q^2 (\log Q)^2 + b Q^2 \log Q + \LandauO(Q^2)$.
	Then
	\[
	\sum_{d\leq Q} \mu(d) \Psi\parentheses*{\dfrac{Q}{d}}
	= \frac{a}{\zeta(2)} Q^2 (\log Q)^2 + \frac{1}{\zeta(2)} \parentheses*{b-2a\dfrac{\zeta'(2)}{\zeta(2)}} Q^2 \log Q + \LandauO(Q^2).
	\]
\end{lem}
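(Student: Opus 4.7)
The plan is to substitute the given asymptotic expansion for $\Psi$ directly into $\sum_{d\leq Q}\mu(d)\Psi(Q/d)$ and track the contributions of each piece. Writing $\log(Q/d)=\log Q-\log d$ and expanding the square reduces the problem to estimating the three Möbius-weighted sums
\[
S_0(Q)\coloneqq\sum_{d\leq Q}\frac{\mu(d)}{d^2},\quad S_1(Q)\coloneqq\sum_{d\leq Q}\frac{\mu(d)\log d}{d^2},\quad S_2(Q)\coloneqq\sum_{d\leq Q}\frac{\mu(d)(\log d)^2}{d^2},
\]
together with the trivial tail bound $\sum_{d\leq Q}(Q/d)^2\ll Q^2$, which disposes of the $\LandauO((Q/d)^2)$ remainder inherited from $\Psi$.

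Next I would invoke the identity $\sum_{n\geq1}\mu(n)n^{-s}=1/\zeta(s)$ valid for $\Re s>1$, differentiating once in $s$ to produce the Dirichlet series for $\mu(n)\log n$. Truncating at $d=Q$ and using the elementary tail estimate $\sum_{d>Q}(\log d)^k/d^2\ll(\log Q)^k/Q$ yields
\[
S_0(Q)=\frac{1}{\zeta(2)}+\LandauO(1/Q),\quad S_1(Q)=\frac{\zeta'(2)}{\zeta(2)^2}+\LandauO\!\parentheses*{\frac{\log Q}{Q}},
\]
while the crude bound $S_2(Q)=\LandauO(1)$ is all that is needed here.

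Finally I would collect the terms by powers of $\log Q$. The coefficient of $Q^2(\log Q)^2$ arises solely from $a\cdot S_0(Q)$, producing the asserted main term up to an error of $\LandauO(Q(\log Q)^2)$. The coefficient of $Q^2\log Q$ is assembled from the cross-term $-2aS_1(Q)$ in the expansion of $(\log Q-\log d)^2$ together with $bS_0(Q)$, giving
\[
bS_0(Q)-2aS_1(Q)=\frac{1}{\zeta(2)}\parentheses*{b-2a\frac{\zeta'(2)}{\zeta(2)}}+\LandauO\!\parentheses*{\frac{\log Q}{Q}}.
\]
The leftover contributions, namely $aQ^2S_2(Q)$, the term $-bQ^2S_1(Q)$, and $\sum_{d\leq Q}\mu(d)\LandauO((Q/d)^2)$, are each $\LandauO(Q^2)$ and are swept into the stated remainder.

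There is no serious obstacle: the argument is routine bookkeeping. The only mild point of care is to verify that the growing factors of $\log Q$ multiplying the $\LandauO(1/Q)$ and $\LandauO((\log Q)/Q)$ truncation errors of the Dirichlet series do not escape the $\LandauO(Q^2)$ remainder; since the worst such term is $\LandauO(Q(\log Q)^2)$, this is comfortably absorbed.
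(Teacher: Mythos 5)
Your proposal is correct: the expansion of $\log(Q/d)$, the evaluation of $\sum\mu(d)d^{-2}$ and $\sum\mu(d)d^{-2}\log d$ via $1/\zeta(s)$ and its derivative, and the absorption of all truncation errors into $\LandauO(Q^2)$ all check out. The paper does not prove this lemma itself but cites Corollary~3 of Zhabitskaya, whose proof is exactly this routine Möbius--Dirichlet-series bookkeeping, so your argument is essentially the same as the one being referenced.
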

\begin{proof}
	For a proof see, e.g., \cite[Corollary~3]{zhabitskaya2009average-length}.
\end{proof}

\begin{lem} \label{lem:Euler'sPhiAsymptotics}
	The following asymptotic formulae hold for any $x\geq 2$:
	\begin{enumerate}
		\item
		\(\displaystyle
		\sum_{q< x}\varphi(q)=\frac{x^2}{2\zeta(2)}+O(x\log x)
		\),
		\item
		\(\displaystyle
		\sum_{q< x}\frac{\varphi(q)}{q}=\frac{x}{\zeta(2)}+O(\log x)
		\),
		\item\label{eq:Euler'sPhiAsymptotics3}
		\(\displaystyle
		\sum_{q< x}\frac{\varphi(q)}{q^2}=\frac{1}{\zeta(2)}\parentheses*{\log x+\gamma-\frac{\zeta'(2)}{\zeta(2)}}+O\parentheses*{\frac{\log x}{x}}
		\),
		\item\label{eq:Euler'sPhiAsymptotics4}
		\(\displaystyle
		\sum_{q< x}\frac{\varphi(q)}{q^2}\log q=\frac{\parentheses*{\log x}^2}{2\zeta(2)}+O(1)
		\).
	\end{enumerate}
	If in addition $\alpha\coloneqq v/w$ is a reduced rational in the unit interval and $\kappa_\alpha$ is defined as in \cref{lem:deltaplusminus}, then for $x>w$
	\[
	\sum_{q<x}\frac{\kappa_\alpha(q)}{q^2}=\sum_{r<w}\frac{(1-\braces*{\alpha r})\zeta\parentheses*{2,r/w}}{w^2\zeta(2)}+\LandauO\parentheses*{\frac{\log x}{x}}.
	\]
\end{lem}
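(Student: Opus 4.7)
My plan is to unfold the definition of $\kappa_\alpha(q)$ from \cref{lem:deltaplusminus} and swap the order of summation. Parametrising each $q<x$ as $q=de$ with $d\mid q$, $d\equiv r\bmod w$, and $e=q/d$ carrying the Möbius weight, the left-hand side becomes
\[
\sum_{r<w}(1-\braces*{\alpha r})\sum_{e<x}\frac{\mu(e)}{e^2}\sum_{\substack{d<x/e\\ d\equiv r\bmod w}}\frac{1}{d^2}.
\]

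The crucial simplification comes next. Since $\gcd(v,w)=1$, one has $\braces*{\alpha r}=\braces*{\alpha d}$ whenever $d\equiv r\bmod w$. Pulling the sum over $r$ inside and merging it with the sum over $d$ recombines them into a single unrestricted sum over $d<x/e$ with $w\nmid d$. Setting
\[
S\coloneqq \sum_{\substack{d\geq 1\\ w\nmid d}}\frac{1-\braces*{\alpha d}}{d^2}
= \sum_{r<w}\frac{(1-\braces*{\alpha r})\zeta(2,r/w)}{w^2},
\]
the whole double sum takes the shape $\sum_{e<x}(\mu(e)/e^2)\parentheses*{S-T_e}$, where the tail $T_e\coloneqq\sum_{d\geq x/e,\, w\nmid d}(1-\braces*{\alpha d})/d^2$ satisfies $T_e=\LandauO(e/x)$ by the trivial estimate $1-\braces*{\alpha d}\leq 1$ and an integral comparison.

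The remaining estimation is routine. The classical asymptotic $\sum_{e<x}\mu(e)/e^2 = 1/\zeta(2) + \LandauO(1/x)$, combined with the uniform bound $S\leq\zeta(2)(1-1/w^2)\leq\zeta(2)$ (coming from $\sum_{r<w}\zeta(2,r/w)/w^2=\zeta(2)(1-1/w^2)$), contributes $S/\zeta(2)+\LandauO(1/x)$, which is precisely the claimed main term. The tail contributions accumulate to
\[
\sum_{e<x}\frac{\abs*{\mu(e)}}{e^2}\cdot\LandauO\parentheses*{\frac{e}{x}}
\ll \frac{1}{x}\sum_{e<x}\frac{1}{e}
\ll \frac{\log x}{x},
\]
producing the advertised error. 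The only subtle step is the residue-swap trick; without it, one would have to estimate the Hurwitz-zeta tail for each $r<w$ separately, which would inflate the error by a factor of $w$ and force the implicit constant to depend on $\alpha$.
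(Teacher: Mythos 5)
Your derivation of the $\kappa_\alpha$ formula is correct and complete, and it is essentially the argument the paper intends: unfold $\kappa_\alpha$ as a Dirichlet convolution, swap the order of summation, recognise the residue-class sums as Hurwitz zeta values, and control the tails with $\sum_{d\geq y}d^{-2}\ll 1/y$ (the paper's appeal to Euler--Maclaurin amounts to the same tail estimate), with the merging of the residues $r$ into the single condition $w\nmid d$ being a clean way to keep the implied constant absolute. The only thing to note is that you address none of the four classical formulae (1)--(4) that the lemma also asserts; the paper likewise only cites standard references for these ((1), (2) textbook, (3) from Boca's Corollary~4.5, (4) from (3) by partial summation), so nothing essential is lost, but a complete write-up should at least record those citations.
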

\begin{proof}
	The first two formulae are well known and the proof of the third one can be found in \cite[Corollary~4.5]{boca2007products-of-matrices}.
	Formula~\cref{eq:Euler'sPhiAsymptotics4} can be deduced easily from~\cref{eq:Euler'sPhiAsymptotics3} and partial summation.
	The last formula follows directly from the definitions of $\kappa_\alpha(q)$ and $\zeta(s,x)$, and computations of well-known harmonic sums via the Euler-Maclaurin summation formula.	
\end{proof}

\begin{lem}\label{lem:SomeMoreAsymptoticFormulae}
	The following asymptotic formulae hold for any $U\geq2$:
	\begin{enumerate}
		\item\(\displaystyle
		\mathop{\sum_k\sum_q}_{k+q<U}\frac{\varphi(q)}{kq^2}
		=\frac{\parentheses*{\log U}^2}{\zeta(2)}+\frac{\log U}{\zeta(2)}\parentheses*{2\gamma-\frac{\zeta'(2)}{\zeta(2)}}+O(1)
		\),
		\item\(\displaystyle
		\mathop{\sum_k\sum_q}_{k+q<U}\frac{\varphi(q)}{q^2}
		=\frac{U\log U}{\zeta(2)}+O(U)
		=\mathop{\sum_k\sum_q}_{k+q<U}\frac{\varphi(q)}{qk}
		\),
		\item\(\displaystyle
		\mathop{\sum_k\sum_q}_{k+q<U}\frac{\varphi(q)k}{q^2}
		=\frac{U^2\log U}{2\zeta(2)}+O\parentheses*{U^2}
		=\mathop{\sum_k\sum_q}_{k+q<U}\frac{\varphi(q)}{k}
		\).
	\end{enumerate}
\end{lem}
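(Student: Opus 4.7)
My plan is to prove each of (1)--(3) by performing one of the two summations in closed form via Lemma~\ref{lem:Euler'sPhiAsymptotics}, reducing each identity to an elementary one-variable sum. Formulae~(2) and~(3) are routine; only~(1) requires an extra step to handle a logarithmically singular remainder.

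For~(2), summing over $k$ first gives $\sum_{1\leq k<U-q}1=U-q+\LandauO(1)$, whence
\[
\mathop{\sum_k\sum_q}_{k+q<U}\frac{\varphi(q)}{q^2}
=U\sum_{q<U}\frac{\varphi(q)}{q^2}-\sum_{q<U}\frac{\varphi(q)}{q}+\LandauO(\log U),
\]
and parts~(2) and~(3) of Lemma~\ref{lem:Euler'sPhiAsymptotics} deliver the stated asymptotic. For the companion sum $\sum\varphi(q)/(qk)$, summing over $q$ first yields $\sum_{q<U-k}\varphi(q)/q=(U-k)/\zeta(2)+\LandauO(\log U)$, and the remaining elementary identity $\sum_{k\leq U-2}(U-k)/k=UH_{U-2}-(U-2)$ finishes the job. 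Similarly, for~(3), I have $\sum_{1\leq k\leq U-q-1}k=(U-q)^2/2+\LandauO(U)$; expanding $(U-q)^2=U^2-2Uq+q^2$ and invoking parts~(1)--(3) of Lemma~\ref{lem:Euler'sPhiAsymptotics} produces the main term $U^2\log U/(2\zeta(2))$. The second equality in~(3) is analogous, this time summing over $q$ first and using part~(1).

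The only non-routine formula is~(1). I sum over $q$ first using part~(3) of Lemma~\ref{lem:Euler'sPhiAsymptotics}: for $U-k\geq 2$,
\[
\sum_{q<U-k}\frac{\varphi(q)}{q^2}=\frac{1}{\zeta(2)}\parentheses*{\log(U-k)+\gamma-\frac{\zeta'(2)}{\zeta(2)}}+\LandauO\parentheses*{\frac{\log U}{U-k}}.
\]
The error term, weighted by $1/k$, contributes $\ll\sum_{k<U-1}\log U/(k(U-k))=\LandauO((\log U)^2/U)=\LandauO(1)$ after partial fractions. Combined with the standard $H_{U-1}=\log U+\gamma+\LandauO(1/U)$, the problem reduces to evaluating $T\coloneqq\sum_{k=1}^{U-1}\log(U-k)/k$. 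Writing $\log(U-k)=\log U+\log(1-k/U)$ yields
\[
T=(\log U)\,H_{U-1}+\sum_{k=1}^{U-1}\frac{\log(1-k/U)}{k}=(\log U)^2+\gamma\log U+\LandauO(1),
\]
the crucial point being that the residual sum is bounded---a discrete analogue of the convergent integral $\int_0^1\log(1-u)/u\,du=-\zeta(2)$. Substituting back reproduces~(1).

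The main obstacle is precisely this boundedness claim, since $\log(1-u)/u$ is singular at $u=1$ and a naive uniform Riemann-sum error estimate fails. I resolve it by splitting at $k=U/2$: for $k\leq U/2$ the Taylor expansion $\log(1-k/U)=-\sum_{n\geq 1}(k/U)^n/n$ gives termwise convergent control; for $k>U/2$, substituting $m=U-k$ exhibits a cancellation between the contribution $\log 2\cdot\log U+\LandauO(1)$ of $\sum_{m<U/2}\log m/(U-m)$ (evaluated by integral comparison) and the opposing $-\log 2\cdot\log U+\LandauO(1)$ of $-\log U\sum_{m<U/2}1/(U-m)$, leaving $\LandauO(1)$.
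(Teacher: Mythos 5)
Your proof is correct and follows essentially the same route as the paper, which disposes of this lemma in one line by asserting that all three formulae ``follow directly from the formulae of the previous lemma'' (\cref{lem:Euler'sPhiAsymptotics}). You have simply supplied the elementary details the authors omit — in particular the evaluation of $\sum_{k}\log(U-k)/k$ and the boundedness of the Riemann sum for $\int_0^1\log(1-u)\,u^{-1}\,\mathrm{d}u$ needed for part~(1) — and these details check out.
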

\begin{proof}
	They follow directly from the formulae of the previous lemma.
\end{proof}
\section{Proofs of the main results}\label{sec:proofsofthemainresults}

In this section we state our main technical result (\cref{prop:gapbetweensystems} below) and show how to deduce \cref{thm:arbitrarycutsell} from it.
We then proceed on proving \cref{thm:Goal} and \cref{thm:ourresult2}.
The proof of \cref{prop:gapbetweensystems} is given in~\cref{sec:ProofOfMainProposition}.

The main step is counting the number of solutions to the systems~\cref{eq:TheSystem:CC} 
asymptotically.
For technical reasons, it turns out to be convenient to group these solutions according to the size of their components and count the number of solutions in each group individually.
To fix the relevant notation, let $U=Q^{1/2}$, and consider the following five cases:
\begin{itemize}
	\item $p \leq q \leq U$;             \hfill (`Case~1')
	\item $p \leq q$, $U < q$;           \hfill (`Case~2')
	\item $q < p \leq U$;                \hfill (`Case~3')
	\item $q < p$, $U < p$, $n \leq U$;  \hfill (`Case~4')
	\item $q < p$, $U < p$, $U < n$.     \hfill (`Case~5')
\end{itemize}

\begin{prop}\label{prop:gapbetweensystems}
	Let $\alpha={v}/{w}$ be a reduced rational number in $(0,1]$ and $Q\geq 1$ be real.
	For $i=1,2,3,4,5$, let $R_{i,\alpha}(U)$ 
	denote the number of solutions $(p, q, n, m)$ to the systems~\cref{eq:TheSystem:CC} 
	subject to the additional constraint that \enquote{Case~$i$} be satisfied. 
	Then we have the following asymptotic formulae:
	\item
	\begin{enumerate}
		\item $R_{1,\alpha}(U)=\dfrac{\alpha \log2}{2\zeta(2)}U^4\log U+O\parentheses*{U^4}$,
		\item $R_{2,\alpha}(U)=\dfrac{\alpha \log 2}{2\zeta(2)}U^4\log U+O\parentheses*{U^4}$,
		\item $\dfrac{R_{3,\alpha}(U)}{U^4}=\dfrac{\alpha (\log U)^2}{4\zeta(2)}+\dfrac{\alpha \log U}{2\zeta(2)}\parentheses*{\gamma-\dfrac{\zeta'(2)}{\zeta(2)}+F(\alpha)-\log 2}+\LandauO\parentheses*{(\log w)^2}$,
		\item $R_{4,\alpha}(U)=\dfrac{\alpha U^4 (\log U)^2}{4\zeta(2)}+ \dfrac{\alpha U^4\log U}{2\zeta(2)}(\gamma-\log 2)+\LandauO\parentheses*{U^4}$,
		\item $R_{5,\alpha}(U)=\dfrac{\alpha U^4(\log U)^2}{2\zeta(2)}+\dfrac{\alpha U^4\log U}{2\zeta(2)}\parentheses*{2\gamma-\dfrac{\zeta'(2)}{\zeta(2)}+F(\alpha)-3}+\LandauO\parentheses*{U^4}$.
	\end{enumerate}
\end{prop}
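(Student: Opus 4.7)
My plan is to attack each of the five cases separately by first counting the lattice points $(n,k)$ for fixed coprime $(p,q)$, and then handling the sum over admissible $(p,q)$ using one of \cref{lem:POMI}, \cref{lem:inversiontrick}, or \cref{lem:deltaplusminus}, the choice being dictated by the relative sizes of $p$ and $q$ and by whether the target main term must detect the $\kappa_\alpha$-correction of \cref{lem:deltaplusminus}. A direct area computation (or the substitution $u=nq+kp$, $v=k$) shows that for fixed coprime $(p,q)$ the number of $(n,k)\in\NN^2$ with $1\leq k<n$ and $2\leq nq+kp\leq Q$ equals $\frac{Q^2}{2q(p+q)}$ up to a manageable perimeter error. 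In Case~4 the extra constraint $n\leq U$ truncates this triangular region, so I would instead fix $n\leq U$ first and count $k<n$ subject to $kp\leq Q-nq$.

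In Cases~1 and~2 (where $p\leq q$) the modulus in the condition $\inv_p(q)\leq\alpha p$ is the smaller variable, so for each fixed $p$ I would apply \cref{lem:POMI} directly to the sum over $q$. The density $\alpha\varphi(p)/p$ is then convolved with the weight $1/(q(p+q))$, and partial summation against $\int\dd{q}/(q(p+q))=\log 2/p+\LandauO(1/p)$ leads to an outer sum $\sum_p\alpha\varphi(p)\log 2/p^2$. Evaluating it by formula~\cref{eq:Euler'sPhiAsymptotics3} of \cref{lem:Euler'sPhiAsymptotics} produces the advertised $\frac{\alpha\log 2}{2\zeta(2)}U^4\log U$ in both cases. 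Case~4 is handled by the same ``POMI first'' strategy, but with $p$ now the larger variable and with an additional integration over $n\leq U$; the coefficients $\gamma$ and $-\log 2$ then come out of items~(iii) and~(iv) of \cref{lem:Euler'sPhiAsymptotics}, and crucially no $\kappa_\alpha$-correction enters.

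In Cases~3 and~5 (where $q<p$) the direct POMI approach would miss two genuine main-term contributions to the coefficient of $\log U$, namely $\gamma-\zeta'(2)/\zeta(2)$ and $F(\alpha)$. The right strategy is to fix $q$ first and use \cref{lem:inversiontrick} to rewrite $\inv_p(q)\leq\alpha p$ as $\inv_q(p)>(1-\alpha)q$, a residue condition modulo $q$. The number of admissible residues is then exactly $\delta_\alpha(q)=\alpha\varphi(q)+\kappa_\alpha(q)$ by \cref{lem:deltaplusminus}. Summing the $\alpha\varphi(q)$-piece against the weight $\log(U/(2q))/q^2$ over $q<U$, items~(iii) and~(iv) of \cref{lem:Euler'sPhiAsymptotics} produce the $(\log U)^2$ term and the coefficients $\gamma$, $-\zeta'(2)/\zeta(2)$, $-\log 2$; summing the $\kappa_\alpha(q)$-piece by the last formula of \cref{lem:Euler'sPhiAsymptotics} is precisely what yields the $F(\alpha)$ contribution.

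The main obstacle will be bookkeeping the error terms so that they collapse into the claimed shapes. The POMI error $\LandauO_\epsilon((X_2-X_1+p)/p^{1/2-\epsilon})$ has to be convolved with $1/(q(p+q))$ and summed against $\varphi(p)/p$ without losing the target $\LandauO(U^4)$, which is delicate precisely because some of the intermediate sums teeter on the boundary of absolute convergence. Moreover, the contribution from small $(p,q)$ where the inversion trick of \cref{lem:inversiontrick} does not apply (i.e.\ both $p<w$ and $q<w$) requires a trivial estimate; this regime is responsible for the $(\log w)^2$ error in Case~3, which can be of the same size as the main term when $w$ is close to $U^{1/2}$.
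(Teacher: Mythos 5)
Your overall strategy coincides with the paper's: count the $(n,k)$-triangle for fixed coprime $(p,q)$ to get the weight $U^4/(2q(p+q))$, apply \cref{lem:POMI} with modulus $p$ when $p\leq q$, and, when $q$ is small relative to $p$, pass to the condition $\inv_q(p)>(1-\alpha)q$ via \cref{lem:inversiontrick} so that the exact residue count $\delta_\alpha(q)=\alpha\varphi(q)+\kappa_\alpha(q)$ of \cref{lem:deltaplusminus} produces the $F(\alpha)$-contribution, with a trivial bound in the range $p,q\leq w$ accounting for the $(\log w)^2$ error. This is essentially how the paper proceeds in Cases~1, 2, 4 and~5 (in Case~5 one must also peel off $q=1$, where the inversion trick does not apply; its contribution $\sum_k U^4/(2k)$ has to be tracked separately, though it largely cancels against parts of the other main terms).

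However, your plan for Case~3 has a genuine gap. If you fix $q$ and count $p$ in the $\delta_\alpha(q)$ admissible residue classes modulo $q$ over the whole range $q<p\leq U$, each class contributes $\frac{1}{q}\log\frac{U}{2q}+\LandauO(1/q)$ to $\sum_p 1/(p+q)$, the $\LandauO(1/q)$ coming from the first term of the progression, which may be as small as $q+1$. Multiplying by the outer weight $1/q$ and summing over the $\asymp\alpha q$ admissible classes leaves an error of $\LandauO(1/q)$ for each $q$, hence $\LandauO(\log U)$ after summing over $q<U$ and $\LandauO(U^4\log U)$ after multiplying by $U^4/2$ --- the same order as the secondary main term you are trying to extract. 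The paper avoids this by splitting Case~3 at $q=p^{1/2}$: for $q<p^{1/2}$ the progressions are long ($p>q^2$), the per-class error improves to $\LandauO(q^{-2})$, and the exact count retains $\kappa_\alpha(q)$, which is what yields $F(\alpha)$; for $p^{1/2}\leq q<p$ one must instead apply \cref{lem:POMI} with modulus $q$ after the inversion, accepting the density $\alpha\varphi(q)/q$ with a power-saving error and discarding $\kappa_\alpha$ harmlessly there. Without this split (or an equivalent device) the coefficient of $U^4\log U$ in $R_{3,\alpha}(U)$ is not recoverable from your scheme, even though every tool you name is the right one.
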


Assuming the conclusion of \cref{prop:gapbetweensystems} for the moment, we are now in position to prove our main results.

\begin{proof}[Proof of \cref{thm:arbitrarycutsell}]
	Let $Q\geq 1$ be an integer and put $U = Q^{1/2}$.
	From \cref{lem:reducedN4system} and \cref{prop:gapbetweensystems} we know that
	\begin{align*}
		N_\alpha(Q)
		&=\sum_{i\leq 5}R_{i,\alpha}(U)+\LandauO(\ell(\alpha)Q^2)\\
		&=\frac{\alpha Q^2(\log Q)^2}{4\zeta(2)}+\frac{\alpha Q^2\log Q}{2\zeta(2)}\parentheses*{2\gamma-\frac{\zeta'(2)}{\zeta(2)}+F(\alpha)-\frac{3}{2}}+\LandauO\parentheses*{E(\alpha)Q^2}.
	\end{align*}
	Moreover, by M\"obius inversion,
	\[
	\sum_{x\in \mathscr{F}_{\alpha}(Q)} \ell(x)=\sum_{d\leq Q}\mu(d)N_\alpha\parentheses*{\frac{Q}{d}}.
	\]
	The first formula of the theorem follows now from the above, \cref{lem:MobInverFormula} and the well-known formula $\sharp \mathscr{F}(Q)={Q^2}/\parentheses*{2\zeta(2)}+\LandauO(Q)$.
	
	The second formula of the theorem follows from the first one by observing that
	\[
	\frac{1}{\sharp \mathscr{F}(Q)} \sum_{x\in\mathscr{F}^\compl_{\alpha}(Q)} \ell(x)=\frac{1}{\sharp \mathscr{F}(Q)} \parentheses*{\sum_{x\in\mathscr{F}_{1}(Q)} \ell(x)-\sum_{x\in\mathscr{F}_{1-\alpha}(Q)} \ell(x)+\LandauO(\ell(1-\alpha))}
	\]
	and that
	\[
	F(1)-(1-\alpha)F(1-\alpha)=-(1-\alpha)F(1-\alpha)=\alpha F(\alpha)-\zeta(2)\parentheses*{1-\frac{1}{w^2}}.
	\qedhere
	\]
\end{proof}

\begin{proof}[Proof of \cref{thm:Goal}]
	If $x$ has continued fraction expansion $x = [0; a_1, \dots, a_n]$, then, by~\cref{eq:hickersonformula},
	\(
	s(x) = \varSigma_{\pm}(x)/12 + \LandauO(1)
	\),
	where the implied constant is absolute.
	In view of~\cref{eq:myersonobservation} we obtain that
	\[
	12\sum_{x\in\mathscr{F}_\alpha(Q)}s(x)+\LandauO\parentheses*{Q^2}= \sum_{x\in\mathscr{F}_\alpha(Q)} (\ell(x)-\ell(1-x))=\sum_{x\in\mathscr{F}_\alpha(Q)}\ell(x)-\sum_{x\in\mathscr{F}^c_\alpha(Q)}\ell(x).
	\]
	The theorem follows now from~\cref{thm:arbitrarycutsell}.
\end{proof}
\begin{proof}[Proof of \cref{thm:ourresult2}]
	Let 
	\[
	\mathscr{S}\coloneqq\set*{\alpha=[0;a_1,a_2,\dots]\in[0,1]\setminus\mathbb{Q}:\sum_{m\leq M}a_m\ll_\alpha\phi^{M/2}},
	\]
	where $\phi$ is the golden ratio.
	One can easily show by standard techniques from the metric theory of continued fractions (see for example \cite{khinchin1964continued-frac}), that the set $\mathscr{S}$ is of full Lebesgue measure.
	
	Let $\alpha\in\mathscr{S}$ and $f(Q)$ be a monotonically increasing positive function such that $f(Q)=o(\log Q)$ as $Q\to\infty$.
	If $p_n/q_n\coloneqq[0;a_1,\dots,a_n]$ denotes the $n$-th convergent of $\alpha$, then we know that
	\[
	\liminf_{n\to\infty}\frac{\log q_n}{n}\geq\log\phi.
	\]
	We employ this property and the one describing the set $\mathscr{S}$ to prove \cref{thm:ourresult2}.
	
	Indeed, let $Q$ be any sufficiently large positive integer. 
	There is a positive integer $N=N(Q)$, which we assume w.l.o.g. that is even, such that
	\begin{align}\label{goldenratio}
		\phi^{N-1}<\parentheses*{f(Q)\log Q}^{1/2}\leq\phi^{N}\leq q_N.
	\end{align}
	Observe that in view of~\cref{eq:myersonobservation} 
	\[
	\ell\parentheses*{\frac{p_N}{q_N}}+\ell\parentheses*{1-\frac{p_N}{q_N}}<\ell\parentheses*{\frac{p_{N+1}}{q_{N+1}}}+\ell\parentheses*{1-\frac{p_{N+1}}{q_{N+1}}}=\sum_{m\leq N+1}a_m\ll_\alpha \phi^{N/2}
	\]
	and
	\[
	\log q_N<\log q_{N+1}\leq\log\prod_{m\leq N+1}(a_n+1)\ll_\alpha \phi^{N/2}.
	\]
	Therefore, the error terms appearing in \cref{thm:arbitrarycutsell} for the case of $\beta\coloneqq p_N/q_N$ and $\beta'\coloneqq p_{N+1}/q_{N+1}$ are all bounded from above by $\LandauO_\alpha\parentheses*{\parentheses*{f(Q)\log Q}^{1/2}}$.
	
	Since $\beta$ and $\beta'$ are successive convergents of $\alpha$, we know that $\beta'-\beta=1/(q_Nq_{N+1})$. 
	In addition to relation \cref{goldenratio} and \cref{thm:arbitrarycutsell} we obtain that
	\begin{align*}
		\Delta(\beta',\beta)
		&\coloneqq\frac{1}{\sharp \mathscr{F}(Q)} \parentheses*{\sum_{x\in\mathscr{F}_{\beta'}(Q)}\ell(x)-\sum_{x\in\mathscr{F}^\compl_{\beta}(Q)}\ell(x)}\\
		&=\parentheses*{1+\frac{\beta'F(\beta')-\beta F(\beta)}{\zeta(2)}}\log Q+\LandauO_\alpha\parentheses*{\frac{\log Q}{f(Q)}+\parentheses*{f(Q)\log Q}^{1/2}}.
	\end{align*}
	However,
	\begin{align*}
		\beta' F(\beta')-\beta F(\beta)
		&=\sum_{r<q_{N+1}}\frac{\parentheses*{1-\braces{\beta' r}}}{q_{N+1}^2}\parentheses*{\frac{q_{N+1}^2}{r^2}+O(1)}+\\
		&\quad-\sum_{r<q_{N}}\frac{\parentheses*{1-\braces{\beta r}}}{q_{N}^2}\parentheses*{\frac{q_{N}^2}{r^2}+O(1)}\\
		&=\sum_{r<q_N}\frac{\braces*{\beta r}-\braces{\beta'r}}{r^2}+\LandauO\parentheses*{\frac{1}{q_N}},
	\end{align*}
	where the last sum is also $\LandauO\parentheses*{1/q_{N}}$ as can be seen from $\braces*{\beta r}-\braces{\beta'r}=-r/(q_Nq_{N+1})$.
	
	In conclusion, we deduce that
	\[
	\Delta(\beta',\beta)=\log Q+\LandauO_\alpha\parentheses*{\frac{\log Q}{f(Q)}+\parentheses*{f(Q)\log Q}^{1/2}}.
	\]
	A similar computation shows that $\Delta(\beta,\beta')$ has the same asymptotics as $\Delta(\beta',\beta)$.
	The theorem follows now from relation
	\[
	\frac{12}{\sharp\mathscr{F}(Q)}\sum_{x\in\mathscr{F}_\alpha(Q)}s(x)=\Delta(\alpha,\alpha')
	\]
	and the inequalities $\beta<\alpha<\beta'$ and $\Delta(\beta,\beta')\leq\Delta(\alpha,\alpha)\leq\Delta(\beta',\beta)$.
\end{proof}
\section{Proof of \texorpdfstring{\cref{prop:gapbetweensystems}}{Proposition\autoref{prop:gapbetweensystems}}}
\label{sec:ProofOfMainProposition}
	
This section is devoted to the proof of \cref{prop:gapbetweensystems}, where we follow the same procedure as in~\cite{MinelliSourmelidisTechnau}.
Therefore, we heavily refer to~\cite{MinelliSourmelidisTechnau} to shorten certain parts of the proof.

\subsection*{Case 1}
We count the number of solutions $R_{1,\alpha}(U)$ of
\[
	\left\lbrace
	\begin{array}{@{}ll@{}}
	\gcd(p,q) = 1,& 2\leq p\leq q\leq U, \\
	2 \leq n q + kp \leq U^2,& 1\leq k<n,\\
	\inv_p(q)\leq \alpha p.
	\end{array}
	\right.
\]
Following \cite[§~5, Case~1]{MinelliSourmelidisTechnau} with $\alpha p$ in place of $p/2$, we find that
\[
	R_{1, \alpha}(U)=\frac{U^4}{2}\mathop{\sum_{2\leq p\leq U}\Sum_{p\leq q\leq U}}_{\inv_{p}(q)\leq \alpha p}\frac{1}{q(p+q)}+O\parentheses*{U^3}=\frac{\alpha \log2}{2\zeta(2)}U^4\log U+O\parentheses*{U^4}.
\]
\subsection*{Case 2}
We count the number of solutions $R_{2, \alpha}(U)$ of
\begin{gather}\label{eq:TheSystem:CC2-gen}
	\left\lbrace
	\begin{array}{@{}lll@{}}
	\gcd(p,q) = 1, &
	2\leq p\leq q,&U<q, \\
	2 \leq n q + kp \leq U^2, &
	1\leq k<n,\\
	\inv_p(q)\leq \alpha p.\\
	\end{array}.
	\right.
\end{gather}
Observe that in this case the inequalities $n\leq U^2/q<U$ hold as well. 
	
Consider the pairs $(n,k)\in\mathbb{N}^2$ for which $1\leq k<n$ and $n+k>U$. 
In that case the number of solutions of the above system is smaller than the number of solutions of the same system without the restrictions on coprimality and modular inversion. 
This number has been estimated in \cite[(54)--(56)]{zhabitskaya2009average-length} to be $\LandauO(U^4)$.
	
Let $\mathscr{C}\coloneqq\set{(p,q)\in\ZZ^2}[\gcd(p,q)=1]$.
It suffices to consider the number of solutions of the system~\eqref{eq:TheSystem:CC2-gen} with the additional restriction $n+k\leq U$.
If we fix such a pair $(n,k)$, then the domain of solutions of \eqref{eq:TheSystem:CC2-gen} can be expressed as the lattice
\[
	\mathscr{L}_{1,\alpha}(n,k)
	= \set*{ (p,q)\in\mathscr{C} }[
	2\leq p\leq\frac{U^2}{n+k},\,
	U<q\leq\frac{U^2-kp}{n}, \,
	\inv_p(q)\leq \alpha p
	]
\]
without the lattice
\[
	\mathscr{L}_{2,\alpha}(n,k)
	= \set*{ (p,q)\in\mathscr{C} }[
	U<p\leq\frac{U^2}{n+k}, \,
	U<q\leq p, \,
	\inv_p(q)\leq \alpha p
	].
\]
The number of integer points of $\mathscr{L}_1(n,k)$ and $\mathscr{L}_2(n,k)$ is equal to 
\[
	S_{1,\alpha}(n,k) \coloneqq\sum_{p\leq U^2/(n+k)} A_{(p, \alpha)}\parentheses*{U,\frac{U^2-kp}{n}},
\]
and
\[
	S_{2,\alpha}(n,k)\coloneqq\sum_{U<p\leq U^2/(n+k)} A_{(p,\alpha)}(U,p),
\]
respectively, where $A_{(p,\alpha)}(x,y)$ is defined as in \cref{lem:POMI}.
By following \cite[§~5, Case~2]{MinelliSourmelidisTechnau} with $A_{(p,\alpha)}$ in place of $A_{(p,1/2)}$, we obtain that
\[
	R_{2,\alpha}(U)=\alpha \frac{\log 2}{2\zeta(2)}U^4\log U+O\parentheses*{U^4}.
\]	
\subsection*{Case 3}
We count the number of solutions $R_{3,\alpha}(U)$ of 
\[
	\left\lbrace\begin{array}{@{}ll@{}}
	\gcd(p,q) = 1, &
	1\leq q<p\leq U, \\
	2 \leq n q + kp \leq Q, &
	1 \leq k < n,\\
	\inv_p(q) \leq \alpha p,
	\end{array}\right..
\]
Similar to Case~1 (see also \cite[(58)--(60)]{zhabitskaya2009average-length}), the number of solutions of the above systems are, up to an error term of order $\LandauO\parentheses*{U^3\log U}$, equal to
\begin{align}\label{eq:TheSolutionsofCC3-gen}
	\frac{U^4}{2}\mathop{\sum_{p\leq U}\Sum_{ q<p}}_{\inv_{p}(q)\leq \alpha p}\frac{1}{q(p+q)}.
\end{align}
We rewrite the first double sum above as
\begin{align*}
	&\mathop{\sum_{w<p\leq U}\Sum_{ p^{1/2}\leq q<p}}_{\inv_{p}(q)\leq \alpha p}\frac{1}{pq}+\mathop{\sum_{w<p\leq U}\Sum_{ q<p^{1/2}}}_{\inv_{p}(q)\leq \alpha p}\frac{1}{pq}+\mathop{\sum_{p\leq w}\Sum_{q<p}}_{\inv_{p}(q)\leq \alpha p}\frac{1}{pq}-\mathop{\sum_{p\leq U}\Sum_{q<p}}_{\inv_{p}(q)\leq \alpha p}\frac{1}{p(p+q)}\\
	&\eqqcolon S_1(U)+S_2(U)+S(\alpha)-S_4(U).
\end{align*}
	
By partial summation and \cref{lem:POMI} we obtain for fixed $\epsilon\in(0,1/2)$ that
\begin{align}\label{eq:case3-gen-S4}
	\begin{aligned}
	S_4(U)
	=\sum_{p\leq U}\frac{1}{p}\brackets*{\frac{A_{(p,\alpha)}\parentheses*{1,p}}{2p}+\int_{1}^p\frac{A_{(p,\alpha)}\parentheses*{1,u}\mathrm{d}{u}}{(u+p)^2}}=\frac{\alpha\log 2\log U}{\zeta(2)}+O(1).
	\end{aligned}
\end{align}
To estimate $S_1(U)$ we will first apply~\cref{lem:inversiontrick}, obtaining
\[
	S_1(U)=\sum_{w^{1/2}<q<U}\frac{1}{q}\mathop{\Sum_{v_q<p\leq V_q}}_{\inv_{q}(p)> (1-\alpha )q}\frac{1}{p}
	,
\]
where $v_q\coloneqq\max\braces{w,q}$ and $V_q\coloneqq\min\braces*{U,q^2}$.
Then partial summation and \cref{lem:POMI} yield that
\[
	\mathop{\Sum_{v_q<p\leq V_q}}_{\inv_{q}(p)> (1-\alpha )q}\frac{1}{p}
	\begin{multlined}[t]
	= \frac{B_{(q,\alpha)}(v_q,V_q)}{V_q} + \int_{v_q}^{V_q}\frac{B_{(q,\alpha)}(v_q,u)\mathrm{d}u}{u^2} \\
	= \frac{\alpha\varphi(q)}{q} \log\frac{V_q}{v_q} + \LandauO_\epsilon\parentheses*{q^{-1/2+2\epsilon}} 
	\end{multlined}
\]
Therefore, for fixed $\epsilon\in(0,1/4)$, we deduce that
\begin{align*}
	S_1(U)
	&=\sum_{w^{1/2}<q\leq w}\frac{\alpha\varphi(q)}{q^2}\log\frac{q^2}{w}+\sum_{w<q\leq U^{1/2}}\frac{\alpha\varphi(q)}{q^2}\log q+{}\\
	&\quad+\sum_{U^{1/2}<q< U}\frac{\alpha\varphi(q)}{q^2}\log\frac{U}{q}+\LandauO(1)\\
	&=\sum_{q\leq U^{1/2}}\frac{\alpha\varphi(q)}{q^2}\log q+\sum_{U^{1/2}<q< U}\frac{\alpha\varphi(q)}{q^2}\log\frac{U}{q}+\LandauO\parentheses*{(\log w)^2}.
\end{align*}
	
For $S(\alpha)$ we have the trivial bound $\LandauO\parentheses{\parentheses{\log w}^2}$.
	
It remains to estimate $S_2(U)$ to which we will apply first \cref{lem:inversiontrick}.
Observe here that to do so we will have to remove from $S_2(U)$ the sum  corresponding to $q=1$. 
This sum is always included in $S_2(U)$ since $p>w$.
Therefore,
\[
	S_2(U)=\sum_{w<p\leq U}\frac{1}{p}+\sum_{2\leq q< U^{1/2}}\frac{1}{q}\mathop{\Sum_{u_q<p\leq U}}_{\inv_{q}(p)> (1-\alpha )q}\frac{1}{p},
\]
where $u_q\coloneqq\max\braces*{w,q^2}$.
Since
\[
	\sharp\set{p\leq x}[p\equiv\inv_{p}(b)\bmod q]=\frac{x}{q}+O(1)
\]
for any coprime integers $1\leq b\leq q$, we obtain from partial summation that
\[
	\sum_{\substack{u_q<p\leq U\\ p\equiv \inv_q(b)}} \frac{1}{p}=\frac{1}{q} \log \frac{U}{u_q}+\LandauO\parentheses*{q^{-2}}.
\]
Hence,
\begin{align*}
    S_{2}(U)
	&=\log\frac{U}{w}+\LandauO(1)+\sum_{2\leq q<U^{1/2}}\frac{1}{q}\,\Sum_{(1-\alpha)q<b\leq q}\parentheses*{\frac{1}{q} \log \frac{U}{u_q}+\LandauO\parentheses*{q^{-2}}}\\
	&=\log\frac{U}{w}+\sum_{2\leq q\leq w^{1/2}}\frac{\delta_\alpha(q)}{q^2}\log\frac{U}{w}+\sum_{w^{1/2}<q<U^{1/2}}\frac{\delta_\alpha(q)}{q^2}\log\frac{U}{q^2}+\LandauO(1).
\end{align*}
where $\delta_\alpha(q)\leq q$ was defined in \cref{lem:deltaplusminus}.
In view of the aforementioned lemma, we conclude that
\begin{align*}
	S_2(U)
	&=\log{U}\sum_{q\leq U^{1/2}}\frac{\delta_\alpha(q)}{q^2}-2\sum_{q<U^{1/2}}\frac{\delta_\alpha(q)}{q^2}\log q+\LandauO\parentheses*{(\log w)^2}\\
	&=\log{U}\sum_{q\leq U^{1/2}}\frac{\alpha\varphi(q)}{q^2}-2\sum_{q<U^{1/2}}\frac{\alpha\varphi(q)}{q^2}\log q+{}\\
	&\quad+\log{U}\sum_{q\leq U^{1/2}}\frac{\kappa_\alpha(q)}{q^2}-2\sum_{q<U^{1/2}}\frac{\kappa_\alpha(q)}{q^2}\log q+\LandauO\parentheses*{(\log w)^2}.
\end{align*}
Observe that $\kappa_\alpha(q)\leq d(q)$, where $d(n)$ denotes the number of divisors of $n$.
Therefore, the last sum in the last line above is $\LandauO(1)$ and, thus,
\[
	S_1(U)+S_2(U)=\sum_{q\leq U}\frac{\alpha\varphi(q)}{q^2}\log \frac{U}{q}+\log{U}\sum_{q\leq U^{1/2}}\frac{\kappa_\alpha(q)}{q^2}+\LandauO\parentheses*{(\log w)^2}.
\]
It follows now from \cref{lem:Euler'sPhiAsymptotics} and relations \eqref{eq:TheSolutionsofCC3-gen} and \eqref{eq:case3-gen-S4} that
\[
	R_{3,\alpha}(U)=\frac{\alpha U^4(\log U)^2}{4\zeta(2)}+\frac{\alpha U^4\log U}{2\zeta(2)}\parentheses*{\gamma-\frac{\zeta'(2)}{\zeta(2)}+F(\alpha)-\log 2}+\LandauO\parentheses*{U^4(\log w)^2}.
\]
\subsection*{Case 4}
We count the number of solutions $R_{4,\alpha}(U)$ of
\begin{gather}\label{eq:TheSystem:CC4-gen}
	\left\lbrace\begin{array}{@{}lll@{}}
	\gcd(p,q) = 1, &
	1\leq q<p,& U<p, \\
	2 \leq n q + kp \leq U^2, &
	1\leq k<n\leq U,\\
	\inv_p(q)\leq \alpha p.
	\end{array}\right.
\end{gather}
Similar to Case~2, we fix $k$ and $n$, and count the number of solutions of the above system, when $n+k\leq U$, and when $n+k>U$.
	
If $n+k>U$, then the number of solutions of \eqref{eq:TheSystem:CC4-gen} is smaller than the number of solutions of the same system without the restrictions on coprimality and modular inversion.
This number has been computed in \cite[(64)--(65)]{zhabitskaya2009average-length} to be $\LandauO\parentheses*{U^4}$.
	
If now $n+k\leq U$, then the domain of solutions of \eqref{eq:TheSystem:CC4-gen} can be expressed as the union of lattices
\[
	\mathscr{L}_{1,\alpha}(n,k)
	= \set*{(p,q)\in\mathscr{C}}[
	U< p\leq\frac{U^2}{n+k},\,
	1\leq q<p, \,
	\inv_p(q)\leq \alpha p
	]
\]
and
\begin{align*}
	\mathscr{L}_{2,\alpha}(n,k) &
	= \set*{(p,q)\in\mathscr{C}}[
	\frac{U^2}{n+k}<p\leq\frac{U^2}{k}, \,
	2\leq q\leq\frac{U^2-k p}{n}, \,
	\inv_p(q)\leq \alpha p
	] \\ &
	= \set*{(p,q)\in\mathscr{C}}[
	2\leq q\leq\frac{U^2}{n+k}-\theta,\,
	\frac{U^2}{n+k}<p\leq\frac{U^2-nq}{k},\,
	\inv_q(p)> (1-\alpha)q
	]
\end{align*}
without the lattice of integer points
\[
	\mathscr{L}_{3,\alpha}(n,k) = \set*{ (p,1) }[ \frac{U^2}{n+k}<p\leq\frac{U^2}{k} ],
\]
where we have also applied \cref{lem:inversiontrick} in $\mathscr{L}_{2,\alpha}(n,k)$  and have introduced a parameter $\theta\in[0,1]$ which may vary.
The number of points of $\mathscr{L}_1(n,k)$, $\mathscr{L}_3(n,k)$ and $\mathscr{L}_2(n,k)$ is equal to 
\[
	S_{1,\alpha}(n,k) \coloneqq\sum_{U<p\leq U^2/(n+k)} A_{(p, \alpha)}\parentheses*{1,p},\quad S_{3,\alpha}(n,k) \coloneqq\sum_{U^2/(n+k)<p\leq U^2/k}1,
\]
and
\[
	S_{2,\alpha}(n,k)\coloneqq\sum_{2\leq q\leq U^2/(n+k)-\theta} B_{(q,\alpha)}\parentheses*{\frac{U^2}{n+k}\frac{U^2-nq}{k}}.
\]
respectively.
Working as in~\cite[§~5, Case~4]{MinelliSourmelidisTechnau}, and using \cref{lem:POMI}, we obtain that
\begin{align*}
	S_{1,\alpha}(n,k)
	&=\sum_{U<p\leq U^2/(n+k)} \frac{\alpha\varphi(p)}{2}+\LandauO_\epsilon\parentheses*{\frac{U^2}{n+k}\sum_{U<p\leq U^2/(n+k)}\frac{1}{p^{1/2-\epsilon}}}\\
	&=\frac{\alpha U^4}{2\zeta(2)(n+k)^2}+\LandauO_\epsilon\parentheses*{U^2+\frac{U^2}{n+k}\log\frac{U^2}{n+k}+\frac{U^{3+2\epsilon}}{(n+k)^{3/2+\epsilon}}},
\end{align*}
\begin{align*}
	S_{2,\alpha}(n,k)=\frac{\alpha n U^4}{2\zeta(2)k(n+k)^2}+\LandauO_\epsilon\parentheses*{\frac{nU^2}{k(n+k)}\log\frac{U^2}{n+k}+\frac{nU^{3+2\epsilon}}{k(n+k)^{3/2+\epsilon}}}
\end{align*}
and
\[
	S_{3,\alpha}(n,k)\ll\frac{nU^2}{k(n+k)}.
\]
Therefore, by fixing $\epsilon\in(0,1/4)$, we conclude that
\begin{align*}
	R_{4,\alpha}(U)=\frac{\alpha }{4\zeta(2)}U^4 (\log U)^2+ \frac{\alpha}{2\zeta(2)}(\gamma-\log 2)U^4\log U+\LandauO\parentheses*{U^4}.
\end{align*}
\subsection*{Case 5}
We count the number of solutions $R_{5, \alpha}(U)$ of
\[
	\left\lbrace
	\begin{array}{@{}lll@{}}
	\gcd(p,q) = 1, &
	1\leq q<p,&U<p, \\
	2 \leq n q + kp \leq U^2, &
	1\leq k<n,&U<n,\\
	\inv_p(q)\leq \alpha p,
	\end{array}
	\right.
\]
which, apart from the number of solutions corresponding to $q=1$, is equivalent to
\begin{equation}\label{eq:TheSystem:CC52-gen-inv}
	\left\lbrace
	\begin{array}{@{}lll@{}}
	\gcd(p,q) = 1, &
	2\leq q<p,&U<p, \\
	2 \leq n q + kp \leq U^2, &
	1\leq k<n,&U<n,\\
	\inv_q(p)>(1-\alpha) q,
	\end{array}
	\right.
\end{equation}
as can be seen from \cref{lem:inversiontrick}.
We may restrict ourselves to the case $k+q<U$ for, otherwise, the system \eqref{eq:TheSystem:CC52-gen-inv} has no solution.
If $S(k,q)$ denotes the number of solutions of \eqref{eq:TheSystem:CC52-gen-inv} for fixed $k$ and $q$, then
\begin{equation}\label{eq:R51}
	R_{5,\alpha}(U)=\sum_{k<U-1}\sum_{U<n\leq U^2-k\ceil{U}}\sum_{U< p\leq\parentheses*{U^2-n}/{k}}1+\mathop{\sum_{k}\sum_{q\geq2}}_{k+q<U}S(k,q),
\end{equation}
where $\ceil{x}$ is the ceiling function.
	
A straightforward computation yields 
\begin{equation}\label{eq:R5major}
	\sum_{k<U-1}\sum_{U<n\leq U^2-k\ceil{U}}\sum_{U< p\leq\parentheses*{U^2-n}/{k}}1=\frac{U^4}{2k}+\LandauO\parentheses*{U^3}.
\end{equation}
On the other hand
\begin{align*}
	S(k,q)
	&=\sum_{U<n\leq\left(U^2-k\lceil U\rceil\right)/q}\mathop{\sum_{(1-\alpha)q<b\leq q}}_{\gcd(b,q)=1}\mathop{\sum_{ U<p\leq \left(U^2-nq/k\right)}}_{p\equiv \inv_{q}(b)} 1\\
	&=\sum_{U<n\leq\parentheses*{U^2-k\lceil U\rceil}/q}\mathop{\sum_{(1-\alpha) q<b\leq q}}_{\gcd(b,q)=1}\parentheses*{\frac{1}{q}\parentheses*{\frac{U^2-nq}{k}-U}+O(1)}\\
	&=\sum_{U<n\leq\parentheses*{U^2-k\lceil U\rceil}/q}\parentheses*{\frac{\delta_\alpha (q)}{q}\parentheses*{\frac{U^2-nq}{k}-U}+O\parentheses*{q}}.
\end{align*}
Recall that $\kappa_\alpha(q)\leq d(q)$.
We then have in view of \cref{lem:deltaplusminus} that, for $q>1$,
\begin{align*}
	S(k,q)
	&=\frac{\alpha\varphi(q)}{q}\sum_{U<n\leq\parentheses*{U^2-k\lceil U\rceil}/q}\parentheses*{\frac{U^2-nq}{k}-U}+\frac{\kappa_\alpha(q)}{kq}\sum_{n\leq\parentheses*{U^2-k\lceil U\rceil}/q}\parentheses*{{U^2-nq}}+\\
	&\quad+\LandauO\parentheses*{U^2+\frac{U^3d(q)}{q^2}+\frac{U^3d(q)}{kq}}\\
	&\eqqcolon\Sigma(k,q)+\frac{\kappa_\alpha(q)U^4}{2kq^2}+\LandauO\parentheses*{U^2+\frac{U^3d(q)}{q^2}+\frac{U^3d(q)}{kq}}.
\end{align*}
Similarly as in \cite[§~5, Case~5]{MinelliSourmelidisTechnau} with $\alpha$ instead of $1/2$, we obtain for $q>1$
\[
	\Sigma(k,q)=\frac{\alpha\varphi(q)U^4}{2kq^2}-\frac{\alpha\varphi(q)U^3}{q^2}-\frac{\alpha\varphi(q)U^3}{kq}+\frac{\alpha\varphi(q)kU^2}{2q^2}+\frac{\alpha\varphi(q)U^2}{2k}+\LandauO\parentheses*{U^2}.
\]
	
From~\cref{eq:R51} and~\cref{eq:R5major}, we infer that
\begin{align*}
	R_{5,\alpha}(U) &
	= \sum_{k< U-1}\parentheses*{\frac{U^4}{2k}-\frac{\alpha U^4}{2k}-\frac{U^4(1-\alpha)}{2k}}
	+ \frac{\alpha U^4}{2}\mathop{\sum_{k}\sum_{q}}_{k+q<U}\frac{\varphi(q)}{kq^2} +{} \\ & \qquad
	+ \frac{U^4}{2}\mathop{\sum_{k}\sum_{q}}_{k+q<U}\frac{\kappa_\alpha(q)}{kq^2} 
	- \alpha U^3\mathop{\sum_{k}\sum_{q}}_{k+q<U}\parentheses*{\frac{\varphi(q)}{q^2}+\frac{\varphi(q)}{kq}} +{} \\ & \qquad
	+ \frac{\alpha U^2}{2}\mathop{\sum_{k}\sum_{q}}_{k+q<U}\parentheses*{\frac{\varphi(q)k}{q^2}+\frac{\varphi(q)}{q}}+\LandauO\parentheses*{U^4}.
\end{align*}
Each of the above sums is computed in \cref{lem:Euler'sPhiAsymptotics} and \cref{lem:SomeMoreAsymptoticFormulae}.
Thus, we conclude that
\[
	R_{5,\alpha}(U)=\frac{\alpha U^4(\log U)^2}{2\zeta(2)}+\frac{\alpha U^4\log U}{2\zeta(2)}\parentheses*{2\gamma-\frac{\zeta'(2)}{\zeta(2)}-3+F(\alpha)}+\LandauO\parentheses*{U^4}.
\]
\section{Remarks and some heuristics}\label{sec:remarksandsomeheuristics}

In this final section we discuss briefly the result we obtained. In particular, we want to formalize the intuition that the main contribution to the sample at irrational cuts is carried by the points which are \enquote{arbitrarily} close to $0$. To this end, we borrow an elementary argument from \cite{conrey1996mean-values}.

\begin{lem}\label{lem:pointsclosetozero}
	Let $g\colon\RR\to\RR$ be a monotonically increasing function with $\lim\limits_{x\to \infty} g(x)=\infty$ and define the set 
	\begin{align*}
		\mathscr{S}(g,Q)=\left\{\frac{a}{b} \in \mathbb{Q}:  b\leq Q\,\text{ and  }\, \frac{a}{b}\leq \frac{1}{g(Q)}\right\}.
	\end{align*}
	Then we have that
	\begin{align*}
		\sum_{\frac{a}{b} \in \mathscr{S}(g,Q)} s(b,a) \ll \frac{Q^2}{g(Q)^2}\log^2(Q).
	\end{align*}
\end{lem}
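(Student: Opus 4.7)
The plan is to take advantage of the fact that $a/b \leq 1/g(Q)$ together with $b \leq Q$ forces the second argument of $s(b,a)$ to satisfy $a \leq Q/g(Q)$, so that $s(b,a)$ depends on $b$ only through its residue class modulo the small integer $a$. The key cancellation then comes from the elementary identity $\sum_{r\bmod a,\,\gcd(r,a) = 1} s(r,a) = 0$, which follows from the antisymmetry $s(a-r,a) = -s(r,a)$.

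First, I would reorganise the sum by grouping fractions according to their numerator:
\[
\sum_{a/b \in \mathscr{S}(g,Q)} s(b,a)
= \sum_{1 \leq a \leq Q/g(Q)} \sum_{\substack{ag(Q) \leq b \leq Q \\ \gcd(a,b) = 1}} s(b,a).
\]
For fixed $a$, since $s(b,a)$ depends only on $b \bmod a$, I would group the inner sum by the residue class $r \equiv b \pmod{a}$. Writing $\#\{b \in [ag(Q),Q] : b\equiv r \pmod{a}\} = M_a + \varepsilon_{a,r}$ with $M_a = (Q - ag(Q))/a$ independent of $r$ and $|\varepsilon_{a,r}| \leq 1$, the inner sum becomes
\[
\sum_{\substack{ag(Q) \leq b \leq Q \\ \gcd(a,b) = 1}} s(b,a)
= M_a \sum_{\substack{r\bmod a \\ \gcd(r,a)=1}} s(r,a)
+ \sum_{\substack{r\bmod a \\ \gcd(r,a)=1}} \varepsilon_{a,r}\,s(r,a).
\]
The first sum on the right-hand side vanishes by the antisymmetry above, so the whole expression is bounded in modulus by $\sum_{r\bmod a,\,\gcd(r,a)=1}|s(r,a)|$.

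Next, to estimate this last sum I would invoke a mean-value bound for Dedekind sums: specifically, the classical second-moment estimate $\sum_{r,\,\gcd(r,a)=1} s(r,a)^2 \ll \varphi(a)(\log a)^2$ of the type established in \cite{conrey1996mean-values}. Combined with the Cauchy--Schwarz inequality this yields
\[
\sum_{\substack{r\bmod a \\ \gcd(r,a)=1}}|s(r,a)|
\leq \sqrt{\varphi(a)}\,\sqrt{\sum_{\gcd(r,a)=1} s(r,a)^2}
\ll a\log a.
\]
Summing over $a \leq Q/g(Q)$, I would conclude
\[
\left|\sum_{a/b\in\mathscr{S}(g,Q)} s(b,a)\right|
\ll \sum_{a\leq Q/g(Q)} a\log a
\ll \left(\frac{Q}{g(Q)}\right)^{2}\log\!\left(\frac{Q}{g(Q)}\right)
\leq \frac{Q^2(\log Q)^2}{g(Q)^2},
\]
which is the desired bound.

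The main obstacle is the step bounding $\sum_r|s(r,a)|$: the trivial estimate $|s(r,a)| \leq a/12$ only yields $O(a^2)$ per $a$, and hence the weaker global bound $O((Q/g(Q))^3)$, which is insufficient for slowly growing $g$. The saving comes from the mean-value estimate, which reflects the fact that $|s(r,a)|$ is of typical size $\log a$ as $r$ runs through reduced residues modulo $a$; this is the essential content of the \emph{elementary} argument from \cite{conrey1996mean-values} that is being borrowed.
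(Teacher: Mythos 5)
Your overall strategy coincides with the paper's: decompose by the numerator $a$ (which is forced to be at most $Q/g(Q)$), observe that $s(b,a)$ depends only on $b \bmod a$, use the vanishing of the complete sum $\sum_{r \bmod a,\ \gcd(r,a)=1} s(r,a)=0$ to kill the main term, and reduce everything to bounding $\sum_{r}\abs{s(r,a)}$ before summing over $a$. Up to that point your argument is fine and is essentially the paper's proof.

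However, there is a genuine gap in the step where you bound $\sum_{r}\abs{s(r,a)}$. The ``second-moment estimate'' you invoke, namely
\[
\sum_{\substack{r \bmod a \\ \gcd(r,a)=1}} s(r,a)^2 \ll \varphi(a)(\log a)^2,
\]
is false. Already the single term $r=1$ gives $s(1,a)=\frac{(a-1)(a-2)}{12a}\asymp a$, so $s(1,a)^2\asymp a^2$, and more generally $s(r,a)\approx a/(12r)$ for small $r$, so the true second moment is of order $a^2$ (this is in fact what Conrey--Fransen--Klein--Scott prove: the even moments $\sum_r s(r,a)^{2m}$ grow like $a^{2m}$ up to arithmetic factors). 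With the correct second moment, Cauchy--Schwarz only yields $\sum_r\abs{s(r,a)}\ll \varphi(a)^{1/2}\cdot a \ll a^{3/2}$, and summing $a^{3/2}$ over $a\leq Q/g(Q)$ gives $(Q/g(Q))^{5/2}$, which does not suffice for slowly growing $g$. The distribution of Dedekind sums is heavy-tailed, so the $L^1$ norm is genuinely much smaller than what Cauchy--Schwarz from the $L^2$ norm can detect: one must instead use directly the first-moment bound
\[
\Sum_{r<a}\abs{s(r,a)}\ll a(\log a)^2,
\]
which is the estimate the paper quotes (from the introduction of Girstmair--Schoi{\ss}engeier). Substituting this in place of your Cauchy--Schwarz step gives $a(\log a)^2$ per numerator and hence, after summation over $a\leq Q/g(Q)$, the stated bound $\ll Q^2 g(Q)^{-2}(\log Q)^2$; with that single replacement your proof is correct and matches the paper's.
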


\begin{proof}
	We start by writing
	\[
		\sum_{\frac{a}{b} \in \mathscr{S}(g,Q)} s(b,a)=\sum_{a<Q/g(Q)}\sum_{ag(Q)<b<Q} s(b,a)
	\]
	Since $s(b,a)=s\parentheses*{\bar{b}, a}$, where $\bar{b}$ denotes the reduction of $b$ modulo $a$, relation~\cref{eq:SumOfDedekindSums} implies that
	\begin{align*}
		\sum_{ag(Q)<b<Q} s(b,a)\ll\max_{S} \Big\vert \sum_{k\in S} s(k, a)\Big\vert,
	\end{align*} 
	where the maximum ranges over all subsets $S$ of the set of the residue classes modulo $a$.
	Moreover, it is known (see for example the introduction of \cite{girstmair2005arithmetic-mean}) that
	\begin{align*}
		\Sum_{k<a} \vert s(k,a)\vert \ll a\log^2 a.
	\end{align*}
	Using the above facts we conclude that
	\[
		\sum_{a<Q/g(Q)}\sum_{ag(Q)<b<Q} s(b,a)\ll \sum_{a<Q/g(Q)}a\log^2 a\ll \frac{Q^2}{g(Q)^2}\log^2(Q).
		\qedhere
	\]
\end{proof}

\begin{lem}
	Let $g(x)=\log x$ and $\mathscr{S}(g, Q)$ be as in \cref{lem:pointsclosetozero}. 
	Then
	\begin{align*}
		\frac{1}{\mathscr{F}(Q)}\sum_{\frac{a}{b} \in \mathscr{S}(g,Q)} s(a,b)=\frac{1}{12}\log Q+O\left(\log\log Q\right).
	\end{align*}
\end{lem}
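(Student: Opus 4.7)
The plan is to combine Dedekind's reciprocity formula
\[
s(a,b) + s(b,a) = -\frac{1}{4} + \frac{1}{12}\parentheses*{\frac{a}{b} + \frac{b}{a} + \frac{1}{ab}},
\]
valid for coprime positive integers $a,b$, with \cref{lem:pointsclosetozero}. Summing reciprocity over $a/b\in\mathscr{S}(g,Q)$ will rewrite the target as $-\sum s(b,a)$ plus a few elementary terms. The first of these is immediately absorbed into the error, because \cref{lem:pointsclosetozero} applied with $g(x)=\log x$ yields $\sum s(b,a)\ll Q^2$, and thus contributes $\LandauO(1)$ after dividing by $\sharp\mathscr{F}(Q)\asymp Q^2$.

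Among the remaining elementary sums, a trivial count gives $\sharp\mathscr{S}(g,Q)\ll Q^2/\log Q$, while $\sum a/b$ and $\sum 1/(ab)$ are even smaller; all of these contribute well within $\LandauO(\log\log Q)$. Consequently, the entire main term must come from $\frac{1}{12}\sum b/a$. I would evaluate the inner sum for fixed $b$ with $\log Q\leq b\leq Q$ via M\"obius inversion on the coprimality condition, combined with the standard harmonic asymptotic $\sum_{m\leq x}m^{-1}=\log x+\gamma+\LandauO(1/x)$. This should produce
\[
\sum_{\substack{1\leq a\leq b/\log Q \\ \gcd(a,b)=1}}\frac{b}{a} = \varphi(b)\log\!\parentheses*{\frac{b}{\log Q}} + \LandauO\!\parentheses*{\sigma_0(b)\log b}.
\]

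Summing over $b$ and invoking $\sum_{b\leq Q}\varphi(b)\log b=\tfrac{Q^2\log Q}{2\zeta(2)}+\LandauO(Q^2)$ (by Abel summation from \cref{lem:Euler'sPhiAsymptotics}) together with $\sum_{b\leq Q}\varphi(b)=\tfrac{Q^2}{2\zeta(2)}+\LandauO(Q\log Q)$ should give
\[
\sum_{a/b\in\mathscr{S}(g,Q)}\frac{b}{a} = \frac{Q^2\log Q}{2\zeta(2)}-\frac{Q^2\log\log Q}{2\zeta(2)}+\LandauO(Q^2).
\]
Dividing by $12\,\sharp\mathscr{F}(Q)$ then produces $\tfrac{1}{12}\log Q+\LandauO(\log\log Q)$, which is exactly the claimed asymptotic.

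The one mildly delicate point will be the M\"obius reduction used to obtain the estimate for the inner sum. Since the truncation $b/\log Q$ is strictly smaller than $b$, divisors $d\mid b$ exceeding $b/\log Q$ must be excluded, and the resulting tails require bookkeeping; however, a crude bound using $\sigma_0(b)$ is already enough, so the error $\sum_{b\leq Q}\sigma_0(b)\log b\ll Q\log^2 Q$ is comfortably absorbed into $\LandauO(Q^2)$.
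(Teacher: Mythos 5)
Your proposal is correct and follows essentially the same route as the paper: Dedekind reciprocity combined with \cref{lem:pointsclosetozero} (applied with $g=\log$, so that $\sum s(b,a)\ll Q^2$) reduces everything to $\tfrac{1}{12}\sum b/a$, which is then evaluated by M\"obius inversion on the coprimality condition and the asymptotics for $\sum\varphi(b)$ and $\sum\varphi(b)\log b$. The paper leaves this last computation as a one-line remark, and your write-up simply supplies the details (including the correct handling of the truncation at $a\leq b/\log Q$), arriving at the same $\tfrac{1}{12}\log Q-\tfrac{1}{12}\log\log Q+\LandauO(1)$ conclusion.
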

\begin{proof}
	By the reciprocity law for Dedekind sums
	\[
	s(a,b)+s(b,a)=\frac{1}{12}\parentheses*{\frac{b}{a}+\frac{a}{b}+\frac{1}{ab}}-\frac{1}{4}
	\]
	and \cref{lem:pointsclosetozero} we obtain that
	\begin{align*}
		\sum_{\frac{a}{b} \in \mathscr{S}(g,Q)} s(a,b)=\frac{1}{12}\sum_{\frac{a}{b} \in \mathscr{S}(g,Q)} \left(\frac{b}{a}+\frac{a}{b}+\frac{1}{ab}\right)+\LandauO\parentheses*{Q^2}
		=\frac{1}{12} \sum_{\frac{a}{b} \in \mathscr{S}(g,Q)}  \frac{b}{a}+\LandauO\parentheses*{Q^2}.
	\end{align*}
	The lemma follows by M\"obius inversion and computing the resulting double sum.
\end{proof}

\end{document}